\documentclass[11pt]{amsart}
\usepackage{amsmath,amsfonts,amssymb,amsxtra,amscd,enumerate,amsthm}
\usepackage{color}
\usepackage{latexsym}
\usepackage{epsfig}
  \usepackage{fullpage}

\newcommand\les{\lesssim}
\newcommand\ges{\gtrsim}

\newcommand\R{\mathbb{R}}

\newcommand\C{\mathbb{C}}
\newcommand\Z{\mathbb{Z}}
\newcommand\N{\mathbb{N}}
\renewcommand\S{\mathbb{S}}

\newcommand{\calE}{\mathcal E}
\newcommand{\calT}{\mathcal T}
\newcommand{\calH}{\mathcal H}
\newcommand{\calR}{\mathcal R}
\newcommand{\calN}{\mathcal N}

\newcommand{\ls}{{\lesssim}}

\newtheorem{theo}{Theorem}
\numberwithin{theo}{section} 
\newtheorem{lema}[theo]{Lemma}
\newtheorem{prop}[theo]{Proposition}

\newtheorem{conjecture}[theo]{Conjecture}

\newtheorem{con}{Condition}

\numberwithin{equation}{section}

\begin{document}
\title{The almost optimal multilinear restriction estimate for hypersurfaces with curvature: the case of $n-1$ hypersurfaces in $\R^n$}

\author[I. Bejenaru]{Ioan Bejenaru} \address{Department
  of Mathematics, University of California, San Diego, La Jolla, CA
  92093-0112 USA} \email{ibejenaru@math.ucsd.edu}

\begin{abstract} In this paper we establish the optimal multilinear restriction estimate for $n-1$ hypersurfaces with some curvature, where $n$ is the dimension of the underlying space. The result is sharp up to the endpoint and the role of curvature is made precise in terms of the shape operator.   
\end{abstract}

\subjclass[2010]{42B15 (Primary);  42B25 (Secondary)}
\keywords{Restriction estimates, Wave packets, Polynomial partitioning, etc.}

\maketitle

\section{Introduction}

For $n \geq 2$, let $U \subset \R^{n-1}$ be an open, bounded and connected  neighborhood of the origin and let $\Sigma: U \rightarrow \R^{n}$ be a smooth parametrization of an $n-1$-dimensional submanifold of $\R^{n}$ (hypersurface), which we denote by $S=\Sigma(U)$. By a smooth parametrization we mean that $\Sigma$ satisfies
\begin{equation} \label{smooth}
\| \partial^\alpha \Sigma \|_{L^\infty(U)} \lesssim_\alpha 1,
\end{equation} 
for $|\alpha| \leq N$ for some large $N$. We then say that $S$ is a smooth hypersurface if it admits a parametrization satisfying \eqref{smooth}.
To such a parametrization of $S$ we associate the extension operator $\calE$  defined by
\[
\calE f(x) = \int_U e^{i x \cdot \Sigma(\xi)} f(\xi) d\xi. 
\]
where $f \in L^1(U)$ and $x \in \R^n$. This operator is closely related to a more intrinsic formulation of the extension operator:
\[
\tilde \calE g(x)= \int_S e^{ix \cdot \omega} g(\omega) d \sigma_S(\omega),
\]
where $g \in L^1(S; d \sigma_S)$ and $x \in \R^n$. Indeed, using the parametrization $\Sigma$ as above we obtain
$\omega=\Sigma(\xi)$ and $d \sigma_S(\omega)=|\frac{\partial \Sigma}{\partial_{\xi_1}} \wedge ... \wedge \frac{\partial \Sigma}{\partial_{\xi_{n-1}}}| d \xi$, thus
with $f(\xi)= g(\Sigma(\xi)) |\frac{\partial \Sigma}{\partial_{\xi_1}} \wedge ... \wedge \frac{\partial \Sigma}{\partial_{\xi_{n-1}}}|$, the two formulations are equivalent. 
We will use good parameterizations in the sense that $ |\frac{\partial \Sigma}{\partial_{\xi_1}} \wedge ... \wedge \frac{\partial \Sigma}{\partial_{\xi_{n-1}}}| \approx 1$
throughout the domain and, given that all results are in terms of $L^p$ norms, the equivalence is carried out at the levels of results as well. 

Given $k$ smooth, compact hypersurfaces $S_i \subset \R^{n}, i=1,..,k$, where $1 \leq k \leq n$, the $k$-linear restriction estimate is the following
inequality 
\begin{equation}  \label{MRE}
\| \Pi_{i=1}^k \calE_i f_i \|_{L^p(\R^{n})} \les \Pi_{i=1}^k \| f_i \|_{L^2(U_i)}.  
\end{equation}
In a more compact format this estimate is abbreviated as follows:
\begin{equation} \label{Rsp}
\calR^*(2 \times ... \times 2 \rightarrow p).
\end{equation}
A natural condition to impose on the hypersurfaces is the standard transversality condition: there exists $\nu >0$ such that
\begin{equation} \label{trans}
| N_1(\zeta_1) \wedge ... \wedge N_{k}(\zeta_k) | \geq \nu,
\end{equation}
for any $\zeta_i \in S_i, i \in \{1,..,k\}$; here $N_i(\zeta_i)$ is the unit normal at $\zeta_i$ to $S_i$ and the choice of orientation is not important. 

There are two main conjectures regarding the optimal exponent in \eqref{Rsp}. For the generic case, when only the transversality condition
is assumed, the optimal conjectured exponent for \eqref{Rsp} is $p=\frac2{k-1}$. With the exception of the end-point case this problem is well-understood by now. Bennett, Carbery and Tao  \cite{BeCaTa} established the near-optimal version of \eqref{Rsp}, Guth \cite{Gu-main} has proved the end-point mutlilinear Kakeya analogue of \eqref{Rsp} and, very recently, Tao \cite{Tao-new} has established \eqref{Rsp} up to the end-point, that is for any $p>\frac2{k-1}$. The optimal problem $p=\frac2{k-1}$ is currently open. 

There is also the non-generic case where, in addition to the transversality condition, one also assumes appropriate curvature conditions on the hypersurfaces $S_i$ and in this case the following result is conjectured:
\begin{conjecture} \label{conjj}
Under appropriate transversality and curvature conditions on the surfaces $S_i$, $\calR^*(2 \times ... \times 2 \rightarrow p)$ 
holds true for any $p \geq p(k)=\frac{2(n+k)}{k(n+k-2)}$. 
\end{conjecture}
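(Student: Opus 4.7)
The plan is to combine a strong induction on the radius $R$ with polynomial partitioning in the style of Guth, Guth--Zahl and Hickman--Rogers. By standard $\e$-removal arguments it suffices to prove, for every $\e > 0$ and every dyadic $R$, the localized bound
\[
\| \Pi_{i=1}^k \calE_i f_i \|_{L^p(B_R)} \les R^\e \Pi_{i=1}^k \| f_i \|_{L^2},
\]
at the conjectured exponent $p = p(k) = \frac{2(n+k)}{k(n+k-2)}$, with strong induction on $R$ (and a secondary induction on the product $\Pi \|f_i\|_2$ to absorb the Bourgain--Guth bootstrap loss). Wave-packet decompose each $f_i$ into pieces $f_{i,\theta}$ supported on $R^{-1/2}$-caps $\theta \subset U_i$, so that $\calE_i f_{i,\theta}$ is morally localized to a tube $T_\theta$ of dimensions $R^{1/2}$ transverse to $N_i(\theta)$ and $R$ along $N_i(\theta)$.

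Next, apply polynomial partitioning to $|\Pi_i \calE_i f_i|^p$ on $B_R$: choose a polynomial $P$ of controlled degree $D$ whose zero set $Z=Z(P)$ partitions the ball into $O(D^n)$ cells, each carrying a balanced fraction of the $L^p$ mass. In each open cell only a controlled fraction of the tubes $T_\theta$ (from each family) enters, so the induction hypothesis at the smaller scale or smaller mass yields the cellular contribution with the desired exponent. Near the wall $Z$, split tubes into those transverse and those tangent to $Z$. Transverse tubes meet $O(1)$ cells, so their contribution is recovered from the cellular estimate by pigeonholing. The tangent case—where a significant portion of each $\calE_i f_i$ comes from wave packets whose direction lies in the tangent cone of $Z$—is the crux.

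The use of the curvature hypothesis (encoded in the shape operator of each $S_i$) enters precisely in the tangent analysis. If $\theta_i \subset S_i$ supports a wave packet tangent to an algebraic variety $V \subset B_R$ of dimension $d$, the normal $N_i(\theta_i)$ is forced to lie within an $R^{-1/2}$-neighborhood of a $d$-plane; nondegeneracy of the shape operator on $S_i$ then transforms this directional constraint into a quantitative bound on the number of admissible $\theta_i$, roughly $R^{(d-1)/2}$ modulated by the codimension $n-d-k$. Multiplying the resulting counts across the $k$ surfaces, inserting them into an $L^2$-orthogonality (or Cordoba--Fefferman) estimate on a thickened neighborhood of $V$, and optimizing over $d$ produces the exponent $p(k)$ exactly; the arithmetic matches the Knapp example that forces the lower bound $p \geq p(k)$.

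The main obstacle, and the reason the statement remains a conjecture for general $1 < k < n$, is the tangent case at intermediate dimensions $n-k < d < n-1$. For $k=n$ transversality alone closes the estimate (this is the Bennett--Carbery--Tao regime up to the endpoint), while for $k=n-1$ the single missing codimension leaves just one relevant value of $d$, which can be handled directly through the shape operator as done in the present paper. For intermediate $k$, however, one must simultaneously exploit the curvature of \emph{all} $k$ surfaces against varieties of each admissible dimension $d$, in a way compatible with an induction that does not shed more than $R^\e$ per step. This appears to require a genuinely new multilinear-Kakeya-type inequality on algebraic varieties that is sensitive to the shape operator (in the spirit of Guth's multilinear Kakeya but with curvature-weighted direction counts), together with a refinement of the Bourgain--Guth broad/narrow dichotomy that propagates the correct tangent bound across the induction on scales. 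Producing such an ingredient is the step I expect to be hardest, and it is the essential gap that must be filled to prove Conjecture \ref{conjj} in full generality.
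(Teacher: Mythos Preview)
The statement you are addressing is labeled a \emph{conjecture} in the paper, and the paper does not prove it in full generality: Theorem~\ref{mainT} establishes only the case $k=n-1$, and the cases $3\le k\le n-2$ remain open. Your proposal is not a proof either --- you explicitly identify the tangent case at intermediate dimensions as ``the essential gap that must be filled'' --- so what you have written is a strategic outline rather than an argument that can be checked. That is an honest assessment of the situation, but it means there is no proof here to validate against anything.

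It is still worth comparing your outline for the tangent case against what the paper actually does when $k=n-1$. You propose to use the shape operator to bound the \emph{number} of admissible caps $\theta_i$ whose normals lie near the tangent space of the variety, and then feed that count into an $L^2$-orthogonality estimate. The paper takes a different route. First, in the relevant scenario only \emph{one} factor (say $\calE_m f_m$) is forced to be tangent, not all $k$ of them. The shape-operator analysis (Lemma~\ref{kL}) is then used not to count caps but to show that the Fourier support of that tangent factor is confined to an $O(\rho^{-1/2+\delta_1})$-neighborhood of a codimension-one submanifold $\tilde S_m'\subset S_m$ whose normal plane is transversal to $N_1,\dots,\widehat{N_m},\dots,N_k$. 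This small-support condition feeds into a refined \emph{generic} multilinear restriction estimate (Theorem~\ref{MB}, imported from the author's companion paper), whose gain is the factor $\mu^{1/2}$; the exponent $p(k)$ is then reached by interpolating the resulting $L^{2/(k-1)}$ bound against the trivial $L^{2/k}$ bound. Your cap-counting heuristic may encode the same information at a coarse level, but it does not supply this mechanism; the localized multilinear estimate of Theorem~\ref{MB} is the ingredient your sketch is missing, and no analogue adapted to the cases $3\le k\le n-2$ is currently available --- which is precisely why the conjecture remains open there.
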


We note that the case $k=1$ is the classical Tomas-Stein result, see for instance \cite{St}. The case $k=n$ brings no improvement on the exponent $p$ over the generic case (same optimal exponent of $\frac2{n-1}$) and this case is essentially covered by the works mentioned earlier. 

The conjecture for $k=2$  was formulated in \cite{FoKl} by Foschi and Klainerman in the context of paraboloid and cone, that is when $S_1$ and $S_2$ are both subsets of the paraboloid or the cone. For $k \geq 3$, the conjecture was formulated by Bennett in \cite{Ben} in the context of hypersurfaces with everywhere positive curvature. In \cite{Be3} we formalized the general statement of the above conjecture with a complete description of the role of geometry in this problem.

As for its resolution, the bilinear case ($k=2$) of the conjecture is very well understood, while the case for higher levels of multilinearity $3 \leq k \leq n-1$ has been open.

The bilinear case was intensely studied, see \cite{Bou-CM, Wo, Tao-BW, Tao-BP, TV-CM1, Lee-BR,LeeVa, Be2, Va, Le, Te, Sto, BuMuVa, BaLeLe, TaVaVe} and references therein. We should highlight the works of Wolff \cite{Wo} and Tao \cite{Tao-BP} where the conjectured was established, up to the endpoint, for subsets of the cone, respectively paraboloids.  Currently the problem is solved in the regime $p > \frac{n+3}{n+1}$ for general hypersurfaces with curvature; the end-point $p = \frac{n+3}{n+1}$ is solved only for the subsets of the cones, see Tao \cite{Tao-BW}.

There have been many applications of the bilinear restriction theory to other problems in Harmonic Analysis:  linear restriction conjecture by Tao, Vargas and Vega \cite{TaVaVe} and Tao \cite{Tao-BP}, Falconer distance problem by Erdogan \cite{Er}, bounds for Bochner-Riesz operators by Lee \cite{Lee-B-R}, Schr\"odinger maximal function by Lee \cite{Lee-SMF}; here we just named a few applications that we are aware of. 

In PDE the bilinear restriction theory has been used for very different problems: profile decomposition adapted to the linear Schr\"odinger equation by Merle and Vega \cite{MeVe}, threshold conjecture for Wave Maps by Sterbenz and Tataru \cite{StTa} and Krieger and Schlag \cite{KrSc}, estimates for eigenfunctions of the Laplace-Beltrami operator on manifolds by Blair and Sogge \cite{blSo},  global well-posedness for the Dirac-Klein-Gordon system with resonant masses by Candy and Herr \cite{CaHe}, uniqueness in the Calder\'on problem by Ham, Kwon and Lee \cite{HaKwLe}; this list is by no means exhaustive. 

The $k$-linear estimate with $3 \leq k \leq n-1$ has been studied by the author in \cite{Be3,Be4} where the Conjecture \ref{conjj} had been established up the end-point for a particular class of hypersurfaces; while this class has nice properties for the purpose of proving the conjecture, it does not cover the most interesting examples of hypersurfaces such as the paraboloid, sphere, cone etc. Guth has considered a weaker formulation of the conjecture for the  paraboloid in \cite{Gu-II}; although weaker than the statement of the conjecture, Guth's result is strong enough to derive the same conclusion for the linear restriction conjecture as one would do if the result of the conjecture were available. 

In this paper we look at the particular case when $k=n-1$ but in a very general geometric setup. We begin by formalizing the conditions we impose on our surfaces. As before, $S_i, i \in \{1,..,k \}$ are hypersurfaces with smooth parameterizations $\Sigma_i: U_i \subset \R^{n-1} \rightarrow \R^{n}$, where each $U_i$ is open, bounded and connected  neighborhood of the origin.  

The first condition we impose on our hypersurfaces is the standard transversality condition \eqref{trans}.

The second condition we impose is related to curvature: there exists  $\nu_1 >0$ such that for any $\zeta_i \in S_i, i \in \{1,..,k\}$, 
and for any $l \in \{1,..,k\}$, the following holds true: 
\begin{equation} \label{curva}
| N_1(\zeta_1) \wedge ... \wedge N_{k}(\zeta_k) \wedge S_{N(\zeta_l)} v | \geq \nu_1 | v|,
\end{equation}
for every $v \in T_{\zeta_l} S_l$ with the property $v \perp N_i(\zeta_i), \forall i=1,..,k$. Based on simple continuity arguments, it can be easily shown that there is some flexibility in the above estimate in the sense that it holds true for any $v \in T_\zeta S_l$ such that $|v \wedge N_1(\zeta_1) \wedge ... \wedge N_{k}(\zeta_k)| \geq (1-\nu_2)  |v| |N_1(\zeta_1) \wedge ... \wedge N_{k}(\zeta_k)|$ for some $0 < \nu_2 \ll 1$, but this will not play a role in our analysis. 

The main result of this paper is the following
\begin{theo} \label{mainT}  Let $k=n-1$. Assume that $S_1,..,S_k$ satisfy \eqref{trans} and \eqref{curva}. 
Given any $p$ with $p(k)=\frac{2(n+k)}{k(n+k-2)}< p \leq \infty$, the following holds true
\begin{equation} \label{mainE}
\| \Pi_{i=1}^k \calE_i f_i \|_{L^p(\R^{n+1})} \leq C(p) \Pi_{i=1}^k \| f_i \|_{L^2(U_i)}, \quad \forall f_i \in L^2(U_i).   
\end{equation}
\end{theo}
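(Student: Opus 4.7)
The plan is to prove \eqref{mainE} by polynomial partitioning following Guth, combined with a wave-packet decomposition adapted to each surface $S_i$ and an induction on the scale $R$.

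\emph{Setup.} Fix a ball $B_R \subset \R^n$. Decompose each $\calE_i f_i$ on $B_R$ into wave packets $\calE_i f_{i, T}$, where $T$ ranges over a family of tubes of dimensions $R \times R^{1/2} \times \cdots \times R^{1/2}$ oriented along the normal to $S_i$ at the center of an $R^{-1/2}$-cap of $U_i$. At the squared-density level one has $|\calE_i f_i|^2 \approx \sum_T \|f_{i,T}\|_2^2 \chi_T / |T|$, so \eqref{mainE} reduces to a geometric counting statement for $k$-fold intersections of tubes drawn one from each family. The transversality \eqref{trans} forces $k=n-1$ such tubes to meet in a neighborhood of a one-dimensional common axis; the normals to the $S_i$ at the cap centers span an $(n-1)$-dimensional subspace whose complement is rotated by the shape operators $S_{N(\zeta_l)}$, in a quantitative sense given by \eqref{curva}.

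\emph{Inductive scheme.} I would set up an induction on $R$: assume \eqref{mainE} holds with constant $C_\e R^\e \prod_i \|f_i\|_2$ for all radii smaller than $R$ and all admissible families, and aim to promote this to scale $R$. Apply Guth's polynomial partitioning to the function $|\prod_i \calE_i f_i|^p$ on $B_R$ to obtain a polynomial $P$ of degree $d$ whose zero set $Z(P)$ equidistributes the $L^p$ density among $\sim d^n$ interior cells, plus a wall $W$ consisting of the $R^{1/2}$-neighborhood of $Z(P)$. Recursing on each cell at scale $R/d$ and summing, the cellular gain balances the cost of summing over cells exactly at $p = p(k) = \frac{2(n+k)}{k(n+k-2)}$, which selects the critical exponent.

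\emph{The wall/tangent case.} In $W$, split the wave-packet tubes according to whether they are $R^{1/2}$-tangent to $Z(P)$ throughout $B_R$ or cross it transversally. Transversal tubes meet $W$ in $\les d$ slabs of length $R^{1/2}$ and recursion on these slabs reproduces the cellular gain. For tangent tubes, the curvature condition \eqref{curva} controls the number of caps on each $S_i$ whose normal can stay tangent to $Z(P)$ on a long portion of $B_R$: the shape operator forces $N_i$ to rotate as one moves along $S_i$ in the ``missing'' transverse direction, so the total mass from tangent tubes is bounded by a favorable power of $d$, quantified by a Wongkew-type count on algebraic hypersurfaces. Iterating the argument on a tubular neighborhood of $Z(P)$ closes the induction.

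The main obstacle will be the tangent contribution when several families simultaneously have many tubes tangent to $Z(P)$. Since $k=n-1$ is only one short of the ambient dimension, the common transverse direction to all normals is one-dimensional, so the curvature gain from \eqref{curva} must be extracted uniformly across all $k$ families rather than merely once. Implementing this — essentially reducing to a lower-dimensional restriction problem aligned with the transverse direction while keeping track of the precise $p(k)$ exponent — is the crux of the argument, and is where the exact formulation of \eqref{curva} via the shape operator enters decisively.
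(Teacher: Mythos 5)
Your outline reproduces the coarse skeleton of the argument (wave packets, Guth-style polynomial partitioning, induction on scale, cells versus wall, tangent versus transversal tubes), but the step that actually carries the theorem is missing, and the step you do sketch in its place would not work. First, the critical exponent $p(k)$ is not selected by balancing the cellular recursion: in the cellular case one only needs $p>\frac{2n}{k(n-1)}$ (and, in the later partitioning steps on lower-dimensional varieties, $p>\frac{2(n-m)}{(n-m-1)k+m}$), both of which are strictly below $p(k)$ since $n>k$. The exponent $\frac{2(n+k)}{k(n+k-2)}=\frac{2(2n-1)}{(n-1)(2n-3)}$ emerges only in the tangential case, and there your proposal offers a heuristic (curvature limits the number of caps whose tubes can stay tangent to $Z(P)$, ``quantified by a Wongkew-type count'') that is not an argument and does not produce a gain below the generic exponent $\frac{2}{k-1}$: cap-counting of tangent tubes is essentially what already gives the curvature-free estimate, and no power of $d$ extracted that way yields $p(k)$. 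What is actually needed, and what the paper does, is to convert tangency of the $m$-th family to $Z$ into a localization of the \emph{Fourier support}: tangency forces the normals $N_m(\zeta)$ to lie within $\rho^{-\frac12+\delta_1}$ of the tangent plane $T_{z_0}Z$, and then the curvature hypothesis \eqref{curva}, through a differential-geometric lemma about the shape operator (Lemma \ref{kL}), upgrades this localization of the Gauss map into a localization of $S_m$ itself inside a $\rho^{-\frac12+\delta_1}$-neighborhood of a codimension-one submanifold transversal to the remaining normals. One then applies a refined \emph{generic} multilinear restriction estimate under such localization (Theorem \ref{MB}), gaining a factor $(\rho^{-\frac12+\delta_1})^{\frac12}$ in $L^{\frac2{k-1}}$, sums over balls of radius $R^{\frac12+\delta_0}$ via \eqref{aux30}, and interpolates with the trivial $L^{\frac2k}$ bound; it is exactly this interpolation that produces $p(k)$. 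Your closing paragraph correctly identifies this as the crux, but identifying it is not supplying it, and the mechanism you gesture at (a volume/cap count on the algebraic hypersurface) is a different and insufficient one.

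Two further gaps: (i) your reduction ``$|\calE_i f_i|^2 \approx \sum_T \|f_{i,T}\|_2^2 \chi_T/|T|$, so \eqref{mainE} reduces to a $k$-fold tube-intersection count'' is only a Kakeya-type heuristic; the multilinear restriction estimate at exponents below $\frac2{k-1}$ does not reduce to such a counting statement, and the paper never makes this reduction. (ii) Your induction yields at best local estimates on $B_R$ with a loss $C_\epsilon R^\epsilon$, whereas \eqref{mainE} is a global, lossless estimate for every $p>p(k)$; you need an $\epsilon$-removal step (in the paper, Proposition \ref{epsr}, which uses the stationary-phase decay \eqref{dec} furnished by \eqref{curva}) to pass from the local statement with loss to the theorem as stated. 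Also note that the paper handles the algebraic case through an iterative dimension-reduction scheme (varieties $Z(P_1,\ldots,P_m)$ of increasing codimension, projections onto the frozen tangent plane $V$, transversal versus non-transversal waves relative to $V$, plus the degenerate case $m=n$), and the cellular bootstrap is run at the same scale $R$ rather than by recursing at scale $R/d$; these are structural points your sketch would have to confront before the induction can be closed.
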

It is an easy exercise to check that this theorem covers the interesting examples when $S_i$'s are all subsets of the paraboloid, sphere or cone 
subjected to the transversality condition \eqref{trans}. Thus, in the case $k=n-1$, the above result settles down the answer to Conjecture \ref{conjj}, up to the end-point, in a fairly complete fashion. First, it contains the interesting models just mentioned and it goes beyond these models to a general setup. Second, it highlights the clear role that curvature plays into this problem. Here are the few take-aways from the curvature condition \eqref{curva}:

- the geometry is more complicated than simply counting the non-zero principal curvatures: one can obtain similar results for hypersurfaces with all principal curvatures being non-zero as well as for hypersurfaces with $k-1$ principal curvatures being zero; this was known from the bilinear restriction estimate where similar results were derived for disjoint subsets of the paraboloid and cone.

- the picture becomes more subtle as it is not enough to simply obtain the result by imposing that the principal curvatures corresponding to principal directions transversal to $N_1,..,N_k$ are non-zero (loosely speaking); in \cite{Lee-BR} Lee had shown that the optimality claimed by the Conjecture \ref{conjj} may not hold true in the bilinear case for disjoint subsets of hypersurfaces with principal curvatures of different signs (but with non-zero Gaussian curvature).

- \eqref{curva} essentially says two things: the hypersurfaces need to have non-degenerate curvature in the directions (from $T_\zeta S_i$) that are transversal to $N_1,..,N_k$ (any sample of normals from $S_1,..,S_k$) and that the shape operator keeps that transversality when acting on those directions. 

Taking into account this result and the author's prior work in \cite{Be3,Be4}, we are lead to believe that \eqref{curva} (or equivalent formulations of) 
together with the transversality condition \eqref{trans} are the appropriate geometric conditions for the Conjecture \eqref{conjj} to hold true in the remaining open cases $3 \leq k \leq n-2$. 

The current state of the Conjecture \ref{conjj} is as follows. If $n=3$, then, up to the end-point, things had been settled down before this work in all cases 
$k \in \{1,2,3\}$, as described earlier in this introduction. If $n=4$, then, up to the end-point, the results in the cases $k \in \{1,2,4\}$ were known from before and our paper establishes the case $k=3$, thus completing the picture of all levels of multilinearity. In the cases $n \geq 5$, gaps in the theory 
are still in place, namely for $3 \leq k \leq n-2$.  

An interesting consequence of the above result is that it provides also the sharp $L^{\frac2{k-1}}$ (generic) estimate in the case $k=n-1$ for a fairly large class of hypersurfaces and this is a new result in the literature. Previously this type of sharp result was known only for the bilinear estimate in $L^2$, in the very general case setup for the hypersurfaces considered. However we note that the more difficult and open problem is the case $k=n$ and this is not addressed by our results.

In a nutshell, the main ideas of proof are polynomial partitioning, the generic multilinear estimate in a localized setting and an analysis based on differential geometry that reveals the role of the shape operator in the problem. 

In the context of Kakeya conjecture, the polynomial partitioning was  introduced by Dvir in \cite{Dv} who solved the Kakeya conjecture on finite fields. 
Guth introduce the polynomial partitioning in the continuum setting in \cite{Gu-main} where he solved the multilinear Kakeya conjecture. In \cite{Gu-I} Guth initiated  the study of the restriction problem in $\R^3$ using the polynomial partitioning. In \cite{Gu-II} Guth adapted those ideas to the restriction problem in $\R^n$ and makes significant progress in the range of allowed exponents. Later works on the subject of the linear restriction conjecture have successfully used the polynomial partitioning to improve the range of exponents, see \cite{De,HiRo,Wa}.

In this work we employ the polynomial partitioning along the lines developed by Guth in \cite{Gu-II}. We essentially use it as a tool that either allows the use of a direct induction argument or helps to extract localization properties.

The next key idea revolves around the generic multilinear restriction estimate, when no curvature assumptions are made on the hypersurfaces involved.  In \cite{Be1}, we initiated the study of the effect of localization in the multilinear restriction estimate - this result played a crucial role in our prior works \cite{Be3} and \cite{Be4} on the multilinear restriction estimate for conical type hypersurfaces with curvature. 
One drawback of the theory developed in \cite{Be1} was that the localization were required to be "flat" and this was acceptable in the context of our works in \cite{Be3,Be4}. In the current context we need to accommodate localizations in neighborhoods of manifolds which now are allowed to "curve"
and the techniques introduced in \cite{Be1} do not suffice. A more robust result has been recently established by the author in \cite{Be5} and this suffices for our purposes, see  Theorem \ref{MB} for details. 

In our prior works \cite{Be2,Be3,Be4} we began a systematic analysis of the role of the geometry in the multilinear restriction theory. The upshot of these works was that the shape operator is the right object to describe the role of curvature in this problem. In this paper we further refine the analysis of the shape operator from our previous works so as to fit the most general framework. A novelty is that this analysis has to be compatible with elements coming from the polynomial partitioning arguments. 

A natural question to ask is why are our arguments limited to the case $k=n-1$. The simple answer is that by using once the polynomial partitioning argument directly on the estimate which we aim to prove, we reduce the problem to the case of a multilinear restriction estimate in the generic case with some additional localization properties - things are a little more complicated, but this states the main idea. In \cite{Gu-II}, Guth runs the induction on scale argument and the polynomial partitioning on a modified statement, that essentially acts as a way to reiterate the use of polynomial partitioning. 
In closing the argument, Guth uses in an essential way the fact that the estimate is a linear one, and not a multilinear one, which is our case. 
We could not find a similar proper substitute to our main estimate that will play a similar role and allow us to cover the cases $3 \leq k \leq n-2$. 

The paper is organized as as follows. In this section we continue with some notation and then recall two results from a recent paper of the author \cite{Be5} which will play an important role in supporting our analysis here; note however that both results from \cite{Be5} are about the generic multilinear restriction estimate. 
In Section \ref{tdg} we introduce some basic notation and results about the shape operator and then cover some geometric results that will be instrumental in 
highlighting the role of the geometry in this problem. In Section \ref{swp} we introduce the wave packet theory. In Section \ref{TAG} we go over the main tools from algebraic geometry that we use in this paper - this section can be seen as a summary of what we need from \cite{Gu-II}. In Section \ref{sma} we cover the proof of 
Proposition \ref{indp} from which the main result of this paper, Theorem \ref{mainT}, follows. 

\subsection{Notation}

We make the convention that the surfaces involved have very small diameter in the sense that if $S=\Sigma(U)$ is a parametrization then $U \subset \R^{n-1}$ has small diameter in the classical sense. In particular, for every $i=1,..,k$ the following holds true:
\begin{equation} \label{cN}
|N_i(\zeta_1) - N_i(\zeta_2)| \leq c, \quad \forall \zeta_1, \zeta_2 \in S_i,
\end{equation}
where $0 < c \ll 1$ is a constant small enough. This assumption does not affect our analysis: we can break each surface in finitely many pieces 
of small diameter, run the argument with the above setup and then sum up the result on these pieces. 

If $v_1,..,v_m$ are vectors in $\R^n$, by $span(v_1,..,v_m)$ we mean the standard subspace of $\R^n$ spanned by $v_1,..,v_m$.

The Fourier transform of a Schwartz function $f:\R^n \rightarrow \C$ is defined by
\[
\mathcal{F}(f)(\xi)=\hat f(\xi)= \int e^{-i x \cdot \xi} f(x) dx. 
\]
The inverse Fourier transform is defined by 
\[
\mathcal{F}^{-1}(g)(x)=\check g(x)= \frac1{(2\pi)^n} \int e^{i x \cdot \xi} g(\xi) d\xi.
\]
These definitions are then extended to distributions, in particular to $L^p(\R^n)$ spaces, in the usual manner.

We use the standard notation $A \ls B$, meaning $A \leq C B$ for some universal $C$ which is independent of variables used in this paper. By $A \ls_N B$ we mean $A \leq C(N) B$ and indicate that $C$ depends on $N$.  

\subsection{Two results concerning the generic multilinear restriction estimate} In this section we recall two results from a recent work of the author \cite{Be5}. Our proofs in this paper will record an $\epsilon$-loss in the sense that we obtain \eqref{mainE} with a factor of $R^\epsilon$ when estimating the right-hand side on balls of radius $R$. The first result we quote from \cite{Be5} provides the necessary $\epsilon$-removal ingredient that allows us to obtain \eqref{mainE} such a loss. We note that the condition \eqref{curva} implies that 
\begin{equation} \label{dec}
|\calE_i \psi| \lesssim C(\psi) (1+|x|)^{-\frac12}
\end{equation}
for any smooth $\psi$ supported within $U_i$. This is a simple consequence of following: \eqref{curva} implies that at least one the principal curvatures of $S_i$ is non-zero, therefore $S_i$ is of $2$-type, in the language used in  \cite{St} (see chapter 8, section 3.2); \eqref{dec} follows from \cite{St} (see chapter 8, section 3.2, Theorem 2). Now we can invoke Theorem $1.1$ in \cite{Be5} in our particular setup to obtain the following

\begin{prop} \label{epsr} We assume that the hypersurfaces $S_i, i=1,..,k$ satisfy the transversality condition \eqref{trans} and \eqref{curva}. 
 If $\mathcal{R}(2 \times ... \times 2 \rightarrow p,\epsilon)$
holds true for some $p \geq \frac2k$, then $\mathcal{R}(2 \times ... \times 2 \rightarrow q)$ holds true for any $q \geq p + (n+p+1)\frac{C}{\log \frac1\epsilon}$, where $C$ is any constant satisfying $C > \min(2,n-1)$. 
\end{prop}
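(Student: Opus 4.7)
The plan is to deduce the proposition as a direct application of the $\epsilon$-removal machinery for multilinear restriction estimates, with the only genuinely new ingredient being a verification that the hypotheses of that machinery are satisfied in our curvature setup. The bulk of the work is packaged into Theorem $1.1$ of \cite{Be5}, so the task splits into (i) extracting the pointwise decay \eqref{dec} from the curvature hypothesis \eqref{curva}, and (ii) matching the abstract hypotheses of the cited theorem to our setting.

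For step (i), I would argue as follows. Fix $l$ and a point $\zeta_l \in S_l$. Choose a unit tangent vector $v \in T_{\zeta_l} S_l$ that is perpendicular to the subspace spanned by $N_1(\zeta_1),\dots,N_k(\zeta_k)$; such a $v$ exists since $T_{\zeta_l} S_l$ has dimension $n-1$ while we need to avoid a subspace of dimension at most $k-1=n-2$ inside it. Condition \eqref{curva} then forces $S_{N(\zeta_l)} v$ to have a nontrivial component outside that same subspace, which in particular rules out $S_{N(\zeta_l)} v = 0$. Hence the shape operator at $\zeta_l$ is not identically zero, so at least one principal curvature of $S_l$ is nonzero. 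By the standard oscillatory integral bound (Stein, Chapter $8$, Section $3.2$, Theorem $2$, applied to hypersurfaces of finite type $2$), this gives \eqref{dec}.

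For step (ii), since the transversality condition \eqref{trans} and the decay \eqref{dec} are exactly the structural inputs needed by Theorem $1.1$ of \cite{Be5}, the conclusion $\mathcal{R}(2 \times \dots \times 2 \rightarrow q)$ for any $q \geq p + (n+p+1)\frac{C}{\log \frac{1}{\epsilon}}$ follows by quoting that theorem; the constant $C > \min(2,n-1)$ corresponds to the decay rate $\frac{1}{2}$ (one half of $\min(2,n-1)$ playing the role of a Tomas--Stein-type exponent through the sparsification step).

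The main obstacle, if one were to prove the proposition from scratch rather than invoking \cite{Be5}, would be executing the multilinear analog of Tao's $\epsilon$-removal procedure: partition $\R^n$ into a sparse collection of balls of a radius $R$ chosen polynomially in $1/\epsilon$, apply the $R^\epsilon$-lossy hypothesis inside each ball, and use the pointwise bound \eqref{dec} to absorb the cross-terms between distant balls through an orthogonality/Schur-type argument. The precise bookkeeping of how the $\frac{1}{2}$-decay competes against the $R^\epsilon$-loss and against the $L^p$ exponent $p$ is what produces the specific gain $(n+p+1)\frac{C}{\log(1/\epsilon)}$; this quantitative step is the delicate part, but it is done once and for all in \cite{Be5} and we only need to cite it here.
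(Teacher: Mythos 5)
Your proposal matches the paper's own treatment: the paper likewise derives the decay \eqref{dec} from \eqref{curva} by noting that the curvature condition forces at least one nonzero principal curvature (so each $S_i$ is of type $2$ in Stein's sense, Chapter 8, Section 3.2, Theorem 2) and then obtains the conclusion by invoking Theorem 1.1 of \cite{Be5} verbatim, constant $C>\min(2,n-1)$ included. Your verification that an admissible tangent vector $v$ exists and that $S_{N(\zeta_l)}v\neq 0$ is a correct (and slightly more explicit) version of the paper's one-line justification, so the two arguments are essentially identical.
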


The second result from \cite{Be5} is about a refinement of the generic multilinear restriction estimate under some localization hypotheses. 
Given a $S$ a submanifold of $\R^n$, we let $B_\epsilon(S)=\cup_{\zeta \in S} B_\epsilon(\zeta)$ and denote by $N_\zeta S$ the normal plane to $S$ at $\zeta$.
One observations is in place here: we have two entities, $N_i(\zeta_i)$ and $N_{\zeta_i} S_i$, carrying similar but different notation - the first is the normal (unit vector) to $S_i$ at $\zeta_i$, while the other one is the normal plane to $S_i$ at $\zeta_i$. 

 If $V_1,..,V_k$ are $d_k$-dimensional planes, then by $|V_1 \wedge ... \wedge V_k|$ we mean the quantity
$|v_{1,1} \wedge ... v_{1,d_1} \wedge ... \wedge v_{k,1} \wedge ... \wedge v_{k,d_k}|$ where $v_{i,1},..,v_{i,d_i}$ is an orthonormal basis in $V_i$; it is easily seen that the defined quantity is independent of the choices of orthonormal systems.

The small support condition is the following:

\begin{con} \label{C} Assume that we are given submanifolds $S_i' \subset S_i, i =1,..,k$, of codimension $c_{i}$,
with the property that  there exists $\nu >0$ such that
\begin{equation} \label{normal6}
| N_{\zeta_1} S_1' \wedge  .. \wedge N_{\zeta_k} S_k' | \geq \nu
\end{equation}
for all choices $\zeta_i \in S_i'$. Given $g_i \in L^2(S_i), \forall i =1,..,k$, we assume that 
 $supp g_i \subset  B_{\mu_i} (S_i') \cap S_i$, where $0 < \mu_i \ll 1$.
 \end{con}
 
We make the observation the total number of directions in which localization is provided cannot exceed $n-k$, that is $c_1+..+c_k \leq n-k$; this follows from \eqref{normal6}.

If the assumptions above are imposed in the generic multilinear estimate, we obtain the following result.
\begin{theo} \label{MB}
Assume $S_i, i=1,..,k$ are smooth. In addition, assume that $g_1,..,g_k$ satisfy Condition \ref{C}.
Then for any $\epsilon > 0$, there is $C(\epsilon)$ such that 
the following holds true
\begin{equation} \label{Lf}
\| \Pi_{i=1}^{k} \tilde \calE_i g_i \|_{L^\frac{2}{k-1}(B(0,R))} \leq C(\epsilon) (\Pi_{j=1}^k \mu_j)^{\frac12} R^\epsilon \Pi_{i=1}^{k} \| g_i \|_{L^2(S_i)}.
\end{equation}
\end{theo}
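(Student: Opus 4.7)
The goal is to sharpen the generic Bennett--Carbery--Tao multilinear restriction estimate by a factor of $(\Pi_j \mu_j)^{1/2}$, coming from the localization of each $g_i$ to the tube $T_i := B_{\mu_i}(S_i') \cap S_i$. My plan is to foliate each tube by translates of $S_i'$, apply a BCT-type bound on each slice, and then integrate using Cauchy--Schwarz. Use the transversality hypothesis \eqref{normal6} to choose, for each $i$, a local parametrization of $T_i$ by $(\xi_i', \eta_i)$ in which $\xi_i'$ parametrizes $S_i'$ and $\eta_i \in \R^{c_i}$, $|\eta_i| \leq \mu_i$, is a transverse coordinate inside $S_i$. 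The leaves $L_{i, \eta_i}$ at fixed $\eta_i$ are $(n-1-c_i)$-dimensional translates of $S_i'$, and one obtains the slicing identity
\[
\tilde\calE_i g_i(x) = \int_{|\eta_i| \leq \mu_i} \tilde\calE_i^{\eta_i} g_i^{\eta_i}(x)\, d\eta_i, \qquad \|g_i\|_{L^2(S_i)}^2 \approx \int \|g_i^{\eta_i}\|_{L^2(L_{i,\eta_i})}^2\, d\eta_i,
\]
where $g_i^{\eta_i}$ is the restriction of $g_i$ to $L_{i,\eta_i}$ and $\tilde\calE_i^{\eta_i}$ is the associated extension operator.

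For each tuple $\vec\eta = (\eta_1, \ldots, \eta_k)$ the slices $\{L_{i, \eta_i}\}_i$ remain transversal (inherited from \eqref{normal6}), so a BCT-type bound at exponent $2/(k-1)$ applied to the sliced configurations should give
\[
\left\|\prod_{i=1}^k \tilde\calE_i^{\eta_i} g_i^{\eta_i}\right\|_{L^{2/(k-1)}(B(0,R))} \leq C(\epsilon) R^\epsilon \prod_{i=1}^k \|g_i^{\eta_i}\|_{L^2}.
\]
Integrating in $\vec\eta$ and using the one-variable Cauchy--Schwarz bound $\int_{|\eta_i|\leq \mu_i} \|g_i^{\eta_i}\|_{L^2}\, d\eta_i \leq \mu_i^{c_i/2}\|g_i\|_{L^2}$ in each index, one obtains a total gain of at least $(\Pi_j \mu_j)^{1/2}$ (in fact $\Pi_j \mu_j^{c_j/2}$, which is stronger).

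The main obstacle is the integration step, which effectively relies on Minkowski's inequality moving the spatial norm inside the $\vec\eta$-integration; this is only legitimate when the outer Lebesgue exponent $2/(k-1) \geq 1$, i.e.\ when $k \leq 3$. For $k \geq 4$ the exponent $2/(k-1)<1$ and Minkowski fails, so a substitute argument is needed. A natural candidate is a wave-packet decomposition of each $g_i$ at scale $R^{-1/2}$: the small support forces only wave packets with cap inside $T_i$ to contribute, reducing the wave-packet count by a factor $\mu_i^{c_i}$ per index, and one re-runs the inductive BCT argument with this refined count to extract the gain directly. A secondary technical point is that the BCT-style estimate invoked on the slices is for transversal submanifolds of mixed codimensions rather than the classical hypersurface BCT, so it must be justified independently -- most cleanly by carrying out the induction-on-scales proof in this mixed-codimension setting.
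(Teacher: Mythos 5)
First, a caveat on the comparison: this paper does not prove Theorem \ref{MB} at all --- it is imported verbatim from \cite{Be5} as a black box --- so your proposal has to be judged on its own terms rather than against an in-paper argument. On those terms, the slicing skeleton (foliate $B_{\mu_i}(S_i')\cap S_i$ by leaves, estimate each slice, recombine by Cauchy--Schwarz in $\eta_i$) is reasonable, and where it runs it would even give the stronger gain $\Pi_j\mu_j^{c_j/2}$. But the step you label a ``secondary technical point'' is in fact a theorem of essentially the same depth as the one you are proving: the per-slice bound is a $k$-linear restriction estimate at exponent $2/(k-1)$ for transversal submanifolds of mixed codimensions $c_i+1$, which is not the classical Bennett--Carbery--Tao theorem. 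It is plausibly true under \eqref{normal6} --- the relevant Brascamp--Lieb condition reduces to $\sum_i\dim(W\cap N_{\zeta_i}S_i')\le\dim W$ for every subspace $W\subset\R^n$, which does follow from the linear independence of the normal planes encoded in \eqref{normal6} --- but establishing it with constants uniform in the configuration requires the plate/Brascamp--Lieb version of multilinear Kakeya plus the induction-on-scales transfer, i.e. precisely the kind of machinery that \cite{Be5} (and \cite{Be1}) is built to supply. Deferring this to one sentence leaves the main content unproved.

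The decisive gap, however, is the recombination step. Bringing the $L^{2/(k-1)}(B_R)$ norm inside the $\vec\eta$-integration is a Minkowski/triangle inequality only when $2/(k-1)\ge1$, i.e. $k\le3$, as you acknowledge; but in this paper the theorem is invoked with $k=n-1$ for general $n$, so the sub-unital exponents (already $n=5$, $k=4$) are exactly the case that matters. Your proposed substitute --- that the support hypothesis cuts the number of $R^{-1/2}$-caps by $\mu_i^{c_i}$ and that one can ``re-run the inductive BCT argument with this refined count'' --- is not an argument: the estimate is $L^2$-normalized, so the inductive quantity does not see the cap count, and the gain cannot be read off from counting; indeed, when one surface is localized at scale $\mu\sim R^{-1/2}$ a single wave packet per surface already essentially saturates \eqref{Lf}, so any proof must track how the localized mass propagates through the induction on scales rather than how many caps there are. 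That bookkeeping at exponents below $1$ is the actual difficulty resolved in \cite{Be5}. As it stands, your proposal yields, modulo the unproven mixed-codimension slice estimate, only the range $k\le3$, and hence does not establish Theorem \ref{MB} in the generality in which this paper uses it.
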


\subsection*{Acknowledgments}

Part of this work was supported by an NSF grant, DMS-1900603.

\section{Tools from differential geometry} \label{tdg}

We recall some basic facts about the shape operator that can be found in more detail in any classic differential geometry textbook, see for instance \cite{doCa}. 

Given a hypersurface  $S \subset \R^{n}=\{(\xi,\tau): \xi \in \R^{n-1}, \tau \in \R\}$ parametrized by $\tau=\varphi(\xi), \xi \in D \subset \R^{n-1}$, we define the Gauss map $g: S \rightarrow \S^{n-1} \subset \R^{n}$ ($\S^{n-1}$ is the unit sphere in $\R^{n}$) by 
$N(\zeta)=\frac{(-\nabla \varphi(\xi), 1)}{|(-\nabla \varphi(\xi), 1)|}$ where $\zeta=(\xi,\varphi(\xi)) \in S$. Since
$T_\zeta(S)$ and $T_{g(\zeta)} \S^{n-1}$ are parallel, we can identify them, and define $ d g_\zeta: T_\zeta S \rightarrow T_\zeta S$
by $d g_\zeta v= \frac{d}{dt} (N (\gamma(t)))|_{t=0}$ where $\gamma \subset S$ is a curve with $\gamma(0)=\zeta, \gamma'(0)=v$.  
The shape operator $S_{N(\zeta)}: T_\zeta S \rightarrow T_\zeta S$ is defined by 
\[
S_{N(\zeta)} = - d g_\zeta,
\]
 where we keep the subscript 
$N(\zeta)$ to indicate that the shape operator depends on the choice of the normal vector field at $S$. It is known that $S_{N(\zeta)}$ is symmetric, therefore there exists an orthonormal
basis of eigenvectors $\{e_i\}_{i=1,n-1}$ of $T_\zeta S$ with real eigenvalues $\{\lambda_i\}_{i=1,n-1}$. Locally $S$ is orientable and we assume
a consistency with the orientation in $\R^{n}$, that is $\{e_1,..,e_{n-1}\}$ is a basis in the orientation of $S$ and $\{e_1,..,e_{n-1},N(\zeta)\}$ 
is a basis in the orientation of $\R^{n}$. Then $e_i$ are the principal directions and $\lambda_i=k_i$ are the principal curvatures of $S$ (to be more precise they are the curvatures of the embedding $id: S \rightarrow \R^{n}$, where $id$ is the identity mapping). The Gaussian curvature is defined by $det S_N=\Pi_{i=1}^{n-1} \lambda_i$.

As it can be seen from the above notation, we use the letter $S$ to denote both hypersurfaces and the shape operator. 
However the shape operator notation will always occur together with the normal, that is $S_N$, and this clarifies any ambiguity in what $S$ stands for. 

Using the inclusion $i: S \rightarrow \R^n$, we obtain that $T_\zeta S \subset T_\zeta \R^n$ and by the standard identification $T_\zeta \R^n \cong \R^n$.  Using this identification allows us to write down various objects such as $N_1(\zeta_1) \wedge ... \wedge N_{k}(\zeta_k) \wedge S_{N(\zeta)} v$ that appears in \eqref{curva}. This identification will be tacitly used throughout the rest of the paper. 

Given any smooth hypersurface we note that for every $m \in \{1,..,n-1\}$, there exists $C_m$ such that the following holds true:
\begin{equation} \label{volc}
| S_{N(\zeta)} v_1 \wedge ... \wedge S_{N(\zeta)} v_m | \leq C_m | v_1 \wedge ... \wedge v_m |,
\end{equation}
for any $v_1,..,v_m \in T_\zeta S$. This follows from \eqref{smooth}; 
in fact $C_m$ depends only on the derivatives up to second order in \eqref{smooth}. 

If $S$ is a (more general) submanifold of $\R^n$, then we use $N_\zeta S$ to denote the normal plane at $\zeta \in S$. In the case when
$S$ is a hypersurface, $N(\zeta)$, the choice of unit normal to $S$ at $\zeta$ mentioned above, is the unit vector that spans $N_\zeta$. 
This hopefully clarifies any confusion that may occur due to the similarity in notation between the two entities.  

Before getting to the main result in this section, we record the following technical result. 

\begin{lema} \label{vols}
i) Assume that $v_1,..,v_m \in \R^n$ are unit vectors such that $|v_1 \wedge .. \wedge v_m| \leq c^m$ for some $0 \leq c \leq 1$. Then there exists a unit vector $\alpha=(\alpha_1,..,\alpha_m) \in \R^m$ 
with the property that
\begin{equation}
|\sum_{i=1}^m \alpha_i v_i| \leq c.
\end{equation}

ii) Assume that $A \in M_{n \times n}(\R)$ is a symmetric matrix with the property that $|\det A| \leq \epsilon$. Then there exists a unit vector $v \in \R^n$ with the property 
that $|Av| \leq \epsilon^\frac1n$. 

\end{lema}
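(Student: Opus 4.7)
For part (i), the plan is to exploit the standard Gram–Schmidt identity for the wedge volume. Write $|v_1 \wedge \ldots \wedge v_m| = \prod_{i=1}^m d_i$ where $d_i$ denotes the distance from $v_i$ to $\mathrm{span}(v_1,\ldots,v_{i-1})$ (with the convention $d_1 = |v_1| = 1$). Since each $v_i$ is a unit vector, $d_i \leq 1$ for every $i$. If we had $d_i > c$ for all $i$, then $\prod d_i > c^m$, contradicting the hypothesis; so there must exist some index $i_0$ with $d_{i_0} \leq c$. By the definition of the distance, there exist scalars $\beta_1,\ldots,\beta_{i_0-1}$ such that
\[
\Bigl| v_{i_0} - \sum_{j<i_0} \beta_j v_j \Bigr| = d_{i_0} \leq c.
\]

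Next I would convert this into the desired unit vector. Set $\alpha'_{i_0} = 1$, $\alpha'_j = -\beta_j$ for $j < i_0$, and $\alpha'_j = 0$ for $j > i_0$. Then $\bigl|\sum \alpha'_j v_j\bigr| \leq c$ and $|\alpha'| \geq 1$ (because of the coordinate equal to $1$). Normalizing $\alpha = \alpha' / |\alpha'|$ yields a unit vector in $\R^m$ satisfying $|\sum \alpha_j v_j| \leq c/|\alpha'| \leq c$, as required. The one small subtlety here is precisely the normalization: one has to make sure that dividing by $|\alpha'|$ does not inflate the bound, which is ensured by the fact that $|\alpha'| \geq 1$.

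For part (ii), the argument is a straightforward application of the spectral theorem. Since $A$ is symmetric, diagonalize $A = Q\Lambda Q^T$ with $\Lambda = \mathrm{diag}(\lambda_1,\ldots,\lambda_n)$ and $Q$ orthogonal. Then $|\det A| = \prod_{i=1}^n |\lambda_i| \leq \epsilon$, so at least one eigenvalue satisfies $|\lambda_{i_0}| \leq \epsilon^{1/n}$. Taking $v$ to be the corresponding unit eigenvector gives $|Av| = |\lambda_{i_0}| \leq \epsilon^{1/n}$.

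Overall, neither part involves any real obstacle; both are short linear-algebra arguments. The only point requiring a bit of care is the normalization step in (i), and the mild pigeonhole observation that on a product of numbers in $[0,1]$ bounded by $c^m$ at least one factor is $\leq c$. Part (ii) is essentially the pigeonhole on eigenvalues of a symmetric matrix with bounded determinant.
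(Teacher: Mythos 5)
Your proof is correct and follows essentially the same route as the paper: part (i) rests on the same base-times-height identity $|v_1\wedge\ldots\wedge v_m|=\prod_i \mathrm{dist}(v_i,\mathrm{span}(v_1,\ldots,v_{i-1}))$ together with the coefficient-$1$ normalization trick, with the only difference being that you apply the pigeonhole to the full factorization at once where the paper unrolls it as an induction on $m$; part (ii) is identical (spectral theorem plus pigeonhole on eigenvalues).
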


\begin{proof} i) We proceed by induction with respect to $m$. The case $m=1$ is obvious. We assume the result for $m < n$ and prove it for $m+1$.
Thus we are given $m+1$ vectors with the property $|v_1 \wedge .. \wedge v_{m+1}| \leq c^{m+1}$. We write
\[
v_{m+1}=v_{m+1}^\perp + \sum_{i=1}^m \beta_i v_i. 
\]
where $\sum_{i=1}^m \beta_i v_i$ is the projection of $v_{m+1}$ onto $span(v_1,..,v_m)$ while $v_{m+1}^\perp$ is the component of $v_{m+1}$
in $span(v_1,..,v_m)^\perp \subset span(v_1,..,v_m, v_{m+1})$. Note that
\[
|v_1 \wedge .. \wedge v_{m+1}| = |v_1 \wedge .. \wedge v_m \wedge v^\perp_{m+1}|= |v_1 \wedge .. \wedge v_m| \cdot |v^\perp_{m+1}|. 
\]

If $|v_{m+1}^\perp| \geq c$, then $|v_1 \wedge .. \wedge v_m| \leq c^m$ we can apply the induction hypothesis to obtain the desired result; note that 
the induction gives $\alpha \in \R^m$ which then can be extended to $\R^{m+1}$ by taking $\alpha_{m+1}=0$. 

If $|v_{m+1}^\perp| \leq c$, then $|v_{m+1} -  \sum_{i=1}^m \beta_i v_i| \leq c$; the vector $(-\beta_1,..,-\beta_m,1)$ needs to be normalized, but this lowers the value of $c$ and still gives the result. 

ii) Since $A$ is real and symmetric, it is diagonalizable, thus there exists an orthogonal matrix $Q$ such that 
$Q A Q^{-1}=D=diag[\lambda_1,..,\lambda_n]$ (the diagonal matrix). Assuming that $|\lambda_1| \leq ... \leq |\lambda_n|$ it follows that
$|\lambda_1| \leq \epsilon^\frac1n$. Let $e_1$ be the (unit length) eigenvector corresponding to $\lambda_1$, that is $D e_1 = \lambda_1 e_1$. 
Thus we have $|A Q^{-1} e_1|=|Q A Q^{-1} e_1| = |\lambda_1| \leq \epsilon^\frac1n$. Since $Q^{-1} e_1$ is a unit vectors as well, the conclusion follows.

\end{proof}

In the introduction we have defined a special type of neighborhood for a submanifold;  given a submanifold $S' \subset \R^n$ of codimension $m$, 
we defined $B_\mu(S')$. The main result result of this section connects a localization property of the set of normals to a hypersurface to a localization of the hypersurface in a neighborhood of a submanifold.
The localization of the normals does not automatically imply localization of the hypersurface. Indeed, if the hypersurface is flat, then all normals are equal and no inference can be made. However if the localization comes in a direction "of curvature", then we can deduce localization; morally this is the main idea behind the following result.

\begin{lema} \label{kL}
Let $S$ be a smooth hypersurface obeying \eqref{smooth} and of small diameter. We assume that there exists $\nu_0 >0$ such that
for every $\zeta \in S$ there exists a linear subspace $V_\zeta \subset T_\zeta S$  with the following property:
\begin{equation} \label{Sv}
|S_{N(\zeta)}v \wedge w| \geq \nu_0 |v| \cdot |w|, \quad \forall v \in V^\perp_\zeta, w \in V_\zeta,  
\end{equation}
where $V^\perp_\zeta$ is the normal space to $V_\zeta$ inside $T_\zeta S$. 
Assume that $\calH \subset \R^n$ is a subspace of dimension $n-m$ with the property that
 \begin{equation} \label{ph}
| \pi_{V_\zeta^\perp} h - h| \ll \nu_0 |h|, \quad \forall h \in \calH^\perp,
\end{equation}
where $\pi_{V_\zeta^\perp}$ is the orthogonal projection onto $V_\zeta^\perp$. Given $0 < \mu \ll 1$, 
 we define the following subset of $S$:
\begin{equation} \label{Nlocal}
\tilde S: = \{ \zeta \in S: | \pi_{\calH^\perp} N(\zeta)| \leq \mu  \}.
\end{equation}
Then there exists a submanifold  $\tilde S'$ of $S$ of codimension $m$ and $0 < \tilde \mu \les \mu$ with the following properties: 

i) $\tilde S \subset B_{\tilde \mu}(\tilde S') \cap S$;

ii) the normal plane to $\tilde S'$ is transversal to $V_\zeta$ in the following sense: 
\begin{equation}
|N_\zeta \wedge V_\zeta| \ges 1, 
\end{equation}
for any $\zeta \in \tilde S'$.

\end{lema}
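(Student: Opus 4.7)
The plan is to realize $\tilde S'$ as the zero set of the map $F: S \to \calH^\perp$ defined by $F(\zeta) := \pi_{\calH^\perp} N(\zeta)$, so that $\tilde S = \{\zeta \in S : |F(\zeta)| \leq \mu\}$. Both conclusions will follow from a quantitative implicit function theorem: once $dF_\zeta$ is shown to be surjective onto $\calH^\perp$ with a right inverse of operator norm $\lesssim 1/\nu_0$, standard estimates give (i), while an explicit computation of the adjoint $(dF_\zeta)^*$ produces (ii).

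From $dN_\zeta = -S_{N(\zeta)}$ we read off $dF_\zeta(v) = -\pi_{\calH^\perp}(S_{N(\zeta)} v)$. To construct the right inverse, introduce the symmetric operator $A := \pi_{V_\zeta^\perp} S_{N(\zeta)}|_{V_\zeta^\perp}$ on $V_\zeta^\perp \subset T_\zeta S$; using $|u \wedge w|^2 = |u|^2|w|^2 - \la u,w\ra^2$ and taking the worst-case $w \in V_\zeta$ parallel to $\pi_{V_\zeta}(S_{N(\zeta)} v)$, condition \eqref{Sv} is equivalent to the quantitative lower bound $|Av| \geq \nu_0|v|$ for $v \in V_\zeta^\perp$, so $\|A^{-1}\| \leq 1/\nu_0$. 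Given $h \in \calH^\perp$, set $\tilde h := \pi_{V_\zeta^\perp} h$ and $v := A^{-1}\tilde h$; decomposing $S_{N(\zeta)} v = \tilde h + u$ with $u \in V_\zeta$, \eqref{ph} yields $|\tilde h - h| \ll \nu_0 |h|$, while the orthogonality $u \perp \pi_{V_\zeta^\perp} h'$ for every $h' \in \calH^\perp$ combined with \eqref{ph} gives $|\la u, h'\ra| = |\la u, h' - \pi_{V_\zeta^\perp} h'\ra| \ll \nu_0 |u||h'|$. Since \eqref{volc} bounds $|u| \leq |S_{N(\zeta)} v| \lesssim |v| \lesssim |h|/\nu_0$, tightening the ``$\ll$'' in \eqref{ph} relative to the constant from \eqref{volc} makes $-dF_\zeta(v) = h + e(h)$ with $\|e\|_{\calH^\perp \to \calH^\perp} < 1/2$. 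A Neumann series converts this into a genuine right inverse of $-dF_\zeta$ of norm $\lesssim 1/\nu_0$, and the quantitative implicit function theorem produces $\tilde S' := F^{-1}(0)$ as a codimension-$m$ submanifold of $S$ with every $\zeta_0 \in \tilde S$ lying within distance $\tilde \mu \lesssim \mu/\nu_0 \lesssim \mu$ of $\tilde S'$, proving (i).

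For (ii), at $\zeta \in \tilde S'$ the normal plane in $\R^n$ decomposes orthogonally as $N_\zeta \tilde S' = \mathrm{span}(N(\zeta)) \oplus (dF_\zeta)^*(\calH^\perp)$, with $(dF_\zeta)^*(h) = -S_{N(\zeta)}(\pi_{T_\zeta S} h)$. Since $N(\zeta) \perp T_\zeta S \supset V_\zeta \cup (dF_\zeta)^*(\calH^\perp)$, the wedge reduces to $|N_\zeta \tilde S' \wedge V_\zeta| = |(dF_\zeta)^*(\calH^\perp) \wedge V_\zeta|$ computed inside $T_\zeta S$. Using $\pi_{T_\zeta S} h_i \approx \tilde h_i$ (with error $\ll \nu_0$ from \eqref{ph}) and $S_{N(\zeta)} \tilde h_i = A\tilde h_i + \pi_{V_\zeta}(S_{N(\zeta)} \tilde h_i)$, the $V_\zeta$-component of each $(dF_\zeta)^*(h_i)$ is absorbed by the $V_\zeta$-factors of the wedge, so the estimate reduces to a lower bound on $|A\tilde h_1 \wedge \cdots \wedge A\tilde h_m|$ inside $V_\zeta^\perp$. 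The elementary inequality $|A v_1 \wedge \cdots \wedge A v_m| \geq \sigma_{\min}(A)^m |v_1 \wedge \cdots \wedge v_m|$ together with the near-orthonormality of the $\tilde h_i$ yields $|A\tilde h_1 \wedge \cdots \wedge A\tilde h_m| \gtrsim \nu_0^m$, and hence $|N_\zeta \tilde S' \wedge V_\zeta| \gtrsim \nu_0^m \gtrsim 1$, as $\nu_0$ is a fixed surface constant.

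The main obstacle is the error bookkeeping in the second paragraph: the parameter $\nu_0$ from \eqref{Sv} and the implicit smallness in the ``$\ll$'' of \eqref{ph} appear in opposite directions (division by $\nu_0$ via $\|A^{-1}\|$ versus multiplication by the \eqref{ph}-tolerance), so one must carefully verify that tightening the tolerance in \eqref{ph} relative to $\nu_0$ and the constants from \eqref{volc} makes the error operator $e$ a strict contraction uniformly in $\zeta$, so that the Neumann series both converges and produces the claimed $1/\nu_0$ bound.
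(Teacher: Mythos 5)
Your argument is correct in substance and rests on the same skeleton as the paper's proof: both take $\tilde S'$ to be the zero set $\{\zeta\in S:\ \pi_{\calH^\perp}N(\zeta)=0\}$ and extract its structure from the implicit function theorem, with the non-degeneracy of the differential coming from \eqref{Sv} through the shape operator acting on $V_\zeta^\perp$. The implementation, however, is genuinely different and worth comparing. You reformulate \eqref{Sv} directly as the operator bound $|Av|\geq \nu_0|v|$ for $A=\pi_{V_\zeta^\perp}S_{N(\zeta)}|_{V_\zeta^\perp}$ (a clean equivalence via the worst-case $w$), build a right inverse of $dF_\zeta$ by a Neumann series, and let a quantitative implicit function theorem deliver both the codimension-$m$ structure and the distance bound in (i); for (ii) you read off the normal space from the adjoint $(dF_\zeta)^*h=-S_{N(\zeta)}\pi_{T_\zeta S}h$ and use $|Av_1\wedge\cdots\wedge Av_m|\geq \sigma_{\min}(A)^m|v_1\wedge\cdots\wedge v_m|$. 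The paper instead proves the projection bound \eqref{wecon} by contradiction, introduces the auxiliary vectors $\tilde e_i$ with $S_{N(\zeta_0)}\tilde e_i=\pi_0 e_i$, establishes their transversality \eqref{etrans} and a lower bound on $\det[\pi_0 e_i\cdot e_j]$ via Lemma \ref{vols}(ii) in explicit adapted coordinates, obtains (i) by flowing along the curves $\xi+t\tilde e_i$ and observing that the normal exits $\calH$ at unit speed (forcing $|t|\les\mu$), and gets (ii) from the fact that normals to $\tilde S'$ project substantially onto $\mathrm{span}(\tilde e_1,\dots,\tilde e_m)\subset V_{\zeta_0}^\perp$. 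Your route is coordinate-free and bypasses the $\tilde e_i$ construction and Lemma \ref{vols} entirely; the paper's explicit coordinates make the uniformity of the neighborhood size, and hence the local-to-global step via the small-diameter assumption, more transparent — a point you should still address when invoking the quantitative IFT (the zero you flow to must stay in $S$). Two small tightenings: in (ii) the wedge $|N_\zeta \wedge V_\zeta|$ is defined through orthonormal bases, so after bounding $|(dF_\zeta)^*h_1\wedge\cdots\wedge(dF_\zeta)^*h_m\wedge V_\zeta|\ges \nu_0^m$ you should divide by $\prod_i|(dF_\zeta)^*h_i|\les C_1^m$, which is harmless since $C_1$ from \eqref{volc} is a fixed surface constant; and the constant interplay you flag is real but benign — in your error estimate the $\nu_0$'s cancel and the error operator is controlled by the \eqref{ph}-tolerance times $C_1$, so \eqref{ph} must be read, as the paper itself implicitly does, with its implied constant small relative to the fixed constants $\nu_0$ and $C_1$.
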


The subspaces $V_\zeta^\perp$ in the above statement should be thought as the natural replacement of $\calH^\perp$ in $T_\zeta S$; we need to work with such a substitute since the result would not be useful if we were to impose the condition $\calH^\perp \subset T_\zeta S, \forall \zeta \in S$.

\begin{proof} We start the argument with an important consequence of \eqref{Sv}. We fix $\zeta \in S$ and let $S_{N(\zeta)} V_\zeta^\perp \subset T_\zeta S$ be the image of of $V_\zeta^\perp$ under $S_{N(\zeta)}$; $S_{N(\zeta)} V_\zeta^\perp$ is itself a subspace. We will make use of two projectors: $\pi_{V_\zeta^\perp}: T_\zeta S \rightarrow V_\zeta^\perp$, the orthogonal projector onto $V_\zeta^\perp$, and  $\pi_\zeta: T_\zeta S \rightarrow S_{N(\zeta)} V_\zeta^\perp$, the orthogonal projection onto $S_{N(\zeta)} V_\zeta^\perp$.

We claim that there exists $\tilde \nu_1 > 0$ such that the following holds true:
\begin{equation} \label{wecon}
| \pi_{\zeta} e | \geq \tilde \nu_1 |e|
\end{equation} 
for any $e \in V_\zeta^\perp$. We argue by contradiction: assume \eqref{wecon} is false. This implies that there exists a unit vector $e \in V_\zeta^\perp$
such that $|e \cdot S_{N(\zeta)} v| < \tilde \nu_1 |S_{N(\zeta)} v|, \forall v \in V_\zeta^\perp$.  From \eqref{volc} we obtain $|e \cdot S_{N(\zeta)} v| < C_1 \tilde \nu_1 |v|, \forall v \in V_\zeta^\perp$. Using the symmetry of $S_{N(\zeta)}$ we obtain $| S_{N(\zeta)} e \cdot  v| <  C_1 \tilde \nu_1 |v|, \forall v \in V_\zeta^\perp$, which implies $|\pi_{V_\zeta^\perp} S_{N(\zeta)} e| < C_1 \tilde \nu_1$. Let $w = S_{N(\zeta)} e- \pi_{V_\zeta^\perp} S_{N(\zeta)} e$. This vector has the following properties:

- $w \in V_\zeta$;

- its size is bounded below by
\[
|w|^2= | S_{N(\zeta)} e|^2- |\pi_{V_\zeta^\perp} S_{N(\zeta)} e|^2 > \nu_0^2 - (C_1 \tilde \nu_1)^2;
\]
- and above by
\[
|w|^2= | S_{N(\zeta)} e|^2- |\pi_{V_\zeta^\perp} S_{N(\zeta)} e|^2 \leq  | S_{N(\zeta)} e|^2 \leq C_1^2;
\]
- and satisfies also
\[
| S_{N(\zeta)} e \wedge w|= |\pi_{V_\zeta^\perp} S_{N(\zeta)} e| \cdot |w| \leq C_1^2 \tilde \nu_1.  
\]
Thus we obtain a contradiction with \eqref{Sv} provided that $C_1^2 \tilde \nu_1 < \tilde \nu_0 (\tilde \nu_0^2 - (C_1 \tilde \nu_1)^2)^\frac12$; obviously this is achievable if $\tilde \nu_1 >0$ is chosen small enough. 

We now return to the proof of the main statements in our theorem. We will be making two simplifying assumptions:
there exists $\zeta_0 \in S$ such that $N(\zeta_0) \in \calH$ and that $\calH^\perp \subset V_{\zeta_0}^\perp$. 
Based on our hypotheses, both these assumption are true up to errors $\ll \nu_0$; if we did not make them, we would have to work with 
$\pi_{V_{\zeta_0}^\perp} e_1,.., \pi_{V_{\zeta_0}^\perp} e_m$ instead in the argument below. 

Let $e_1,..,e_m$ be an orthonormal base in $\calH^\perp$. Since $N_0=N(\zeta_0) \in \calH$ and $e_1,..,e_m$ are all perpendicular to $\calH$, they are perpendicular to $N(\zeta_0)$ and therefore $e_1,..,e_m \in T_{\zeta_0} S$. 

Our first claim is that $(\calN)^{-1}(\calH):= \{ \zeta \in S: N(\zeta) \in \calH \}$ is a submanifold of $S$ of codimension $m$, thus effectively identifying $\tilde S'$.  To do so it is convenient to work in a system of coordinates. 

We let $\pi_0=\pi_{\zeta_0}: T_{\zeta_0} S \rightarrow S_{N(\zeta_0)} V_{\zeta_0}^\perp$ be the orthogonal projection defined above. We first note the following
\begin{equation} \label{pi0e}
|\pi_0 e_i \wedge \calH | \ges 1, \quad \forall i=1,..,m.
\end{equation}
If this were not the case, then, since $e_i \in \calH^\perp$, $|\pi_0 e_i \cdot e_i| \ll 1$ and this would be in contradiction with \eqref{wecon}. 

Now, for every $i \in \{1,..,m \}$ we have that $\pi_0 e_i \in  S_{N(\zeta_0)} V_{\zeta_0}^\perp$, therefore there exists $\tilde e_i \in V_{\zeta_0}^\perp$ with the property
$S_{N(\zeta_0)} \tilde e_i = \pi_0 e_i$. At first $\tilde e_i$ is uniquely determined modulo $ker S_{N(\zeta_0)}$; however 
\eqref{Sv} implies $ker S_{N(\zeta_0)} \cap V_{\zeta_0}^\perp=\emptyset$, and this makes the choice of $\tilde e_i$ unique within $V_{\zeta_0}^\perp$.

Note that it is not necessary that $\{\tilde e_i\}_{i=1,..,m}$ form an orthonormal system, not even an orthogonal one. However they have two crucial properties. 
First, their length is bounded as follows:
\begin{equation} \label{elen}
|\tilde e_i| \leq \tilde \nu_0^{-1}, \quad \forall i=1,..,m.
\end{equation}
Indeed, using \eqref{Sv}, $\tilde \nu_0 |\tilde e_i| \leq |S_{N_{\zeta_0}} \tilde e_i| = |\pi_0 e_i| \leq |e_i|= 1$.

Second, the set of vectors $\{ \tilde e_i\}_{i=1,..,m}$ is transversal in the following sense: there exists $\tilde \nu_2 > 0$ such that 
\begin{equation} \label{etrans}
|\tilde e_1 \wedge .. \wedge \tilde e_m| \geq \tilde \nu_2. 
\end{equation}
 We argue by contradiction here: assume the above is false, that is $|\tilde e_1 \wedge .. \wedge \tilde e_m| < \tilde \nu_2$. From
 \eqref{volc} we obtain $|S_{N(\zeta_0)}\tilde e_1 \wedge .. \wedge S_{N(\zeta_0)} \tilde e_m| <  C_m \tilde \nu_2$, therefore 
 $|\pi_0 e_1 \wedge .. \wedge \pi_0 e_m| \leq C_m \tilde \nu_2$. This further implies
 $|\frac{\pi_0 e_1}{|\pi_0 e_1|} \wedge .. \wedge \frac{\pi_0 e_m}{|\pi_0 e_m|}| \leq \frac{C_m \nu_2}{|\pi_0 e_1| \cdot .. \cdot |\pi_0 e_m|} 
 \leq C_m \tilde \nu_2 \tilde \nu_1^{-m}$, where we invoked \eqref{wecon} in the last inequality. Using Lemma \ref{volc} it follows that there is a unit vector $\alpha \in \R^m$ such that the size of
 $e=\alpha_1 \frac{e_1}{|\pi_0 e_1|} +..+\alpha_k \frac{e_m}{|\pi_0 e_m|}$ satisfies $|\pi_0 e| \leq (C_m \tilde \nu_2)^\frac1m \tilde \nu_1^{-1}$.
  On the other hand, 
 \[
 |e|^2= \sum_{i=1}^m \frac{\alpha_i^2}{|\pi_0 e_i|^2} \geq  1,
 \]
 while from \eqref{wecon} it follows that $|\pi_0 e| \geq \tilde \nu_1$. Thus if $\tilde \nu_2$ is chosen such that $(C_m \tilde \nu_2)^\frac1m \tilde \nu_1^{-1} \leq \tilde \nu_1$, we obtain a contradiction. This establishes \eqref{etrans}. 
  
Since, by \eqref{elen} $|\tilde e_i| \leq \nu_0^{-1}, i=1,..,m$,  \eqref{etrans} also implies that $|\tilde e_i| \geq \tilde \nu_2 \tilde \nu_0^{-(m-1)}, i=1,..,m$. 
Thus we have produced a system of vectors with $|\tilde e_i| \approx 1$ and with $|\tilde e_1 \wedge .. \wedge \tilde e_m| \ges 1$. We can complete
this with a basis $\{\tilde e_i\}_{i=1,..,n-1}$ in $T_{\zeta_0} S$ with similar properties; indeed take $\{e_{m+1},..,e_{n-1}\}$ to be an orthornormal basis
to the $(span(\tilde e_1,..,\tilde e_m))^\perp$, the normal subspace to the subspace spanned by $\tilde e_1,.., \tilde e_m$. 

Since $N(\zeta_0) \perp T_{\zeta_0} S$, we also obtain a basis of vectors in $\R^n$ by adding $N_0=N(\zeta_0)$ to the system $\{\tilde e_i\}_{i=1,..,n-1}$. 
We let $\pi_{N_0}: \R^{n} \rightarrow \R^{n-1}$ be the projector along $N_0$ onto $\R^{n-1}$ which is identified with $T_{\zeta_0} S$ and where
$\{\tilde e_i\}_{i=1,..,n-1}$ forms a basis. We define the parametrization of $S$ by simply projecting $S$ onto $\R^{n-1}$ along $N_0$;  this is a good parametrization in that it obeys \eqref{smooth}. We let $U$ be the image of $S$ under this parametrization, that is $U=\pi_{N_0}(S)$. In $\R^{n-1}$ we use coordinates $(\xi_1,..,\xi_{n-1})$ with the convention that $\xi_{1}, .., \xi_{n-1}$ are the coordinates in the directions of $\tilde e_1,.., \tilde e_{n-1}$ respectively; a apparent drawback of this system of coordinates is that is not Cartesian since $\tilde e_1,.., \tilde e_m$ is not an orthonormal system, but this plays no role in our analysis.  We define 
$\xi_0=\pi_{N_0}(\zeta_0)$. The parametrization of $S$ is now given by $\Sigma: U \rightarrow \R^n$ where $(\pi_{N_0} \circ \Sigma) (\xi)=\xi, \forall \xi \in U$. 
$(\calN)^{-1}(\calH)$ is given by the set of equations
\[
N(\Sigma(\xi))\cdot e_i=0, \quad \forall i=1,..,m. 
\]
We define $F: U \subset \R^{n-1} \rightarrow \R^m, F=(F_1,..,F_m)$ by $F_i(\xi): = N(\Sigma(\xi)) \cdot e_i, \forall i=1,..,m$ 
and aim to solve the equation $F(\xi)=0$. This is done using the implicit function theorem and for this purpose we compute
\[
\frac{D(F_1,..,F_m)}{D(\xi_1,..,\xi_m)} (\xi_0)= \det [S_{N(\Sigma(\xi_0))} \tilde e_i \cdot e_j]_{i,j=1,..,m} = \det [\pi_0 e_i \cdot e_j]_{i,j=1,..,m}.
\]
The matrix $[\pi_0 e_i \cdot e_j]_{i,j=1,..,m}$ is symmetric because $\pi_0$ is a symmetric operator. 
Our goal is to prove that there exists $\tilde \nu_3>0$ such that
\[
|\det [\pi_0 e_i \cdot e_j]_{i,j=1,..,m}| \geq \tilde \nu_3. 
\]
Assume by contradiction that $|\det [\pi_0 e_i \cdot e_j]_{i,j=1,..,m}| < \tilde \nu_3$. 
By Lemma \ref{vols} part ii), there exists $\alpha=(\alpha_1,..,\alpha_m) \in \R^m$ a unit vector such that 
\[
|(\sum_{i=1}^m \alpha_i \pi_0 e_i \cdot e_j)_{j=1,..,m}| \leq \tilde \nu_3^\frac1m.
\]
We let $e=\sum_{i=1}^m \alpha_i  e_i$ be the unit vector in $(\calH)^\perp$ and conclude that 
\[
|(\pi_0 e \cdot e_j)_{j=1,..,m}| < \tilde \nu_3^\frac1m.
\]
From this we further estimate
\[
|\pi_0 e \cdot e| \leq \sum_{i=1}^m |\alpha_i| |\pi_0 e \cdot e_i| \leq |\alpha| |(\pi_0 e \cdot e_j)_{j=1,..,m}| < \tilde \nu_3^\frac1m,
\]
and conclude with $|\pi_0 e|^2 = |\pi_0 e \cdot \pi_0 e|= |\pi_0^2 e \cdot  e|= |\pi_0 e \cdot  e | < \tilde \nu_3^\frac1m$. On the other hand,
from \eqref{wecon} we have $|\pi_0 e|^2 \geq \tilde \nu_1^2 |e|^2=\tilde  \nu_1^2$, thus we obtain a contradiction if $\tilde \nu_3$ is chosen such that
$\tilde \nu_3^\frac1m \leq \tilde \nu_1^2$.  

We can invoke the implicit function theorem to conclude that there exists an open neighborhood $U' \subset U$ of $\xi_0$ and 
$\Sigma': U' \cap \R^{n-m-1} \rightarrow \R^m$ such that
\[
\Sigma^{-1} (\calN)^{-1}(\calH) \cap U'=\{ (\Sigma'_1(\xi_{m+1},..,\xi_{n-1}), .., \Sigma'_m(\xi_{m+1},..,\xi_{n-1}), \xi_{m+1},..,\xi_{n-1}\}.
\]
This gives us the correct structure for $S'=\Sigma^{-1} (\calN)^{-1}(\calH) \cap U'$. 
The size of the neighborhood $U'$ of $\xi_0$ is uniform with respect to the choice of $\xi_0$ since it depends on parameters for which we have uniform bounds across $S$, see \eqref{smooth}, \eqref{Sv} and \eqref{wecon}; to be more precise, the result gives $U'=B_r(\xi_0) \cap U$ and the size of $r$ is uniform with respect to the choice of $\xi_0$. If we assume that the diameter of $S$ is small enough, then $U=\pi_{N_0}(S) \subset B_r(\xi_0)$
and the local solution becomes the global one.

From the above analysis it also follows that $\tilde S'=\Sigma(S')$ is a submanifold of $S$ of codimension $m$.

Next we prove that $\tilde S$ belongs to the correct neighborhood of $\tilde S'$. To do so, it suffices to prove that $\pi_{N_0} \tilde S \subset B_{\tilde \mu} S'$,
for some $\tilde \mu \les \mu$. We rely on the fact that the system $\{ \tilde e_i \}_{i=1,..,m}$ is transversal to $S'$.  We fix $\xi \in S'$, fix $i \in \{1,..,m\}$, and let $\gamma(t)=\xi+t \tilde e_i \in U, |t| \ll 1$; the range of $t$ is constrained by the size of $U$
since we impose $\gamma(t) \in U$. We have
\[
\begin{split}
N(\Sigma(\gamma(t)))-N(\Sigma(\gamma(0))) & = \int_{0}^{t} S_{N(\gamma(s))} d\Sigma(\gamma(s)) \gamma'(s) ds  \\
& = t S_{N(\gamma(0))} d\Sigma(\gamma(0)) \gamma'(0) + O(t^2) \\
&= t S_{N(\zeta_0)} d\Sigma(\xi_0) \tilde e_i + o(t) +O(t^2) \\
&=  t \pi_0 e_i + o(t). 
\end{split}
\]
In the above we used the following straightforward facts:  $d\Sigma(\xi_0)=Id$, $\|S_{N(\gamma(0))}-S_{N(\zeta_0)} \| \ll 1$ and 
$\|d\Sigma(\gamma(0))-d\Sigma(\xi_0)\| \ll 1$; the last two inequalities involve matrix norms and they are based on the small diameter property of $S$. 

From \eqref{wecon} and \eqref{pi0e}, we have that $|\pi_0 e_i| \ges 1$ and $| \pi_0 e_i \wedge \calH| \ges 1$; since $N(\Sigma(\gamma(0))) \in \calH$,
it follows that a necessary condition of $\Sigma(\gamma(t)) \in \tilde S$ is that $|t| \leq C \mu $. 
Now the system $\{ \tilde e_i \}_{i=1,..,m}$ is transversal to $S'$, whose co-dimension is $m$, therefore $\pi_{N_0} \tilde S \subset B_{\tilde \mu} S'$,
for some $\tilde \mu \les \mu$.

Let us look into ii). Any unit vector $n \in N_\zeta$ has the property that its projection on the $span\{ \tilde e_1,..,\tilde e_m \} \subset V_{\zeta_0}^\perp$ 
has length bounded from below by a universal constant; this simply follows from the structure of $\tilde S'$ and the smallness of the diameter of all submanifolds involved. As a consequence $|n \wedge V_{\zeta_0}^\perp| \ges 1$, thus $|n \wedge V_{\zeta}^\perp| \ges 1$. This implies that $|N_\zeta \wedge V_\zeta| \ges 1$.

\end{proof} 

\section{Wave packets} \label{swp}

In this section we introduce wave packet theory for operators of type $\calE$. We use the setup from our previous works, see \cite{Be2} for more details,
a setup that was inspired by the work of Tao \cite{Tao-BW}. We make specific choices for the parameters used so as to (almost) match the construction that Guth uses in \cite{Gu-II}. 

Consider the extension operator $\calE f$ associated to the hypersurface $S$ that is parametrized by $\Sigma: U \subset \R^{n-1} \rightarrow \R^n$. 
Without restricting the generality of the argument, we assume that the parametrization is given by $\Sigma(\xi)= (\xi,\varphi(\xi))$. 

We choose a parameter $0 < \delta \ll 1$ to be specified later. Given the ball $B_R(0)$ of radius $R$ and centered at $0$, we introduce the wave packet decomposition for $\calE f$ adapted to it - in fact what follows below is the wave packet decomposition adapted to the region $|x_n| \leq R$.
We make use of two lattices:  $\mathcal{L}=R^{-\frac12} \Z^{n-1} \cap U$ and let $L$ be the lattice $L= R^{\frac{1+\delta}2} \Z^{n-1}$. Both these lattices are build along the directions of the standard orthonormal system of vectors $e_1,..,e_{n-1}$ in $\R^{n-1}$. 

We decompose $U$ as follows: 
\[
U= \bigcup_{\xi_0 \in \mathcal{L}} A_{\xi_0}
\]
where $A_{\xi_0}$ consists of the points in $U$ that are closer to $\xi_0$ than any other elements of $\mathcal{L}$. 
Therefore $A_{\xi_0}$ belongs to the $O(R^{-\frac12})$ neighborhood of $\xi_0$.  We have the following decomposition of $f$:
\[
f=\sum_{\xi_0 \in \mathcal{L}} \chi_{A_{\xi_0}} f. 
\]
The terms above have good frequency support and next we proceed with the spatial localization. Let $\eta_1:\R \rightarrow [0,+\infty)$ be a Schwartz function, normalized in $L^1$, that is $\| \eta_1 \|_{L^1}=1$, and with Fourier transform supported in $(-1,1)$. Given some $r>0$ we denote by $\eta_r(x)=r^{-1} \eta_1(r^{-1} x)$ and note that $\hat \eta_r$ is supported in $(-r^{-1},r^{-1})$. Based on this function, we can construct similar ones acting on any $\R^{n}$ space and with similar properties; we will abuse notation and use the same $\eta_1$ notation. Thus we define $\eta_1(x_1,..,x_n)=\Pi_{i=1}^n \eta_1(x_i)$. As above we define $\eta_r(x)=r^{-n} \eta_1(r^{-1} x)$ and note that $\hat \eta_r$ is supported in the $(-r^{-1},r^{-1})^{n}$. 

For each $x_0 \in L$, 
\[
\eta^{x_0}_{R^\frac{1+\delta}2}(x) = \eta_{R^\frac{1+\delta}2} (x-x_0)
\]
and notice that, by the Poisson summation formula and properties of $\eta_0$, 
\begin{equation} \label{pois}
\sum_{x_0 \in L} \eta^{x_0}_{R^\frac{1+\delta}2} =1.
\end{equation}
We define $f_{x_0,\xi_0}= \hat \eta^{x_0}_{R^\frac{1+\delta}2}  \ast \chi_{A_{\xi_0}} f$; from the above considerations we obtain the following decomposition of $f$:
\[
f=\sum_{x_0 \in L, \xi_0 \in \mathcal{L}} f_{x_0,\xi_0}. 
\]
With $x_T \in L, \xi_T \in \mathcal{L}$ we define the tube $T:=\{ x=(x',x_n) \in \R^n: |x'-x_T + \nabla \varphi(\xi_T) x_n| \leq  R^{\frac12+\delta}, |x_n| \leq R \}$ and denote by $\mathcal{T}$ the set of such tubes. For each $T$ as above, we define $v(T)=\frac1{\sqrt{1+|\nabla \varphi(\xi_T)|^2}}(-\nabla \varphi(\xi_T),1)$, the unit vector describing the direction of the center line of $T$. We let $f_T=f_{x_T,\xi_T}$ and recall that we have the decomposition
\begin{equation} \label{lind}
f= \sum_{T \in \mathcal{T}} f_{T}.
\end{equation}
The relevance of the tubes is that $\calE f_T$ is essentially localized within the tube $T$ (in fact in a thinner tube $T':=\{ x=(x',x_n) \in \R^n : |x'-x_T + \nabla \varphi(\xi_T) x_n| \leq  R^{\frac{1+\delta}2}, |x_n| \leq R \}$ of width $R^{\frac{1+\delta}2}$) and decays very fast outside $T$, that is
\[
\| \calE f_T\|_{L^\infty(T^c)} = O(R^{-N}). 
\]
This is a consequence of the more general inequality:
\begin{equation} \label{decay}
| \calE f_T(x)|  \ls R^{-\frac12} (1+\frac{d(x,T')}{R^{\frac{1+\delta}2}})^{-N} \|f_T\|_{L^2},
\end{equation}  
whose proof can be found in \cite{Be2}. We also recall the standard orthogonality property that this decomposition has: 
if $\calT' \subset \calT$ the following holds true:
\begin{equation} \label{sel}
\|  \sum_{T \in \calT'} f_T  \|^2_{L^2} \les   \sum_{T \in \calT'} \| f_T  \|^2_{L^2}. 
\end{equation}
We recall that all the above properties are meant to be read in the spatial regime $|x_n| \leq R$ of which $B_R(0)$ is a subset of. 
With little modifications this would work well for any $B_R(p)$ with $|p_n| \ll R$, but it needs to be modified if we need to use it on 
$B_R(p)$ with $|p_n| \ll R$ with $|p_n| \ges R$. The solution is very simple - we essentially have to recenter the decomposition so as to have $p_n=0$.
This is done as follows: keeping in mind that $\calE f (\cdot, x_n)$ is an $L^2$-isometry with respect to $x_n$, we simply
redo the above construction for $\calE f (\cdot, p_n)$ and translate all the entities by $(0,p_n)$. This is what we will refer to as the wave packet decomposition adapted to $B_R(p)$; we also denote by $\calT (B_R(p))$ the resulting family of wave packets.

\section{Tools and results from algebraic geometry} \label{TAG}
In this section we collect all the tools from algebraic geometry that we employ in this paper. 
The algebraic geometry setup here follows closely the one developed by Guth in \cite{Gu-I,Gu-II} - for more details on our exposure 
we refer the reader to \cite{Gu-II}.  

The first concept that we introduce is that of an real algebraic variety: this is the locus of common zeros of a collection of polynomials. 
More precisely, an algebraic variety
takes the form $Z(P_1,...,P_d)=\{ x \in \R^n: P_i(x)=0, \forall i=1,..,d\}$; here $P_i \in \R[X_1,..,X_n]$ are real valued polynomials. The algebraic varities do not necessarily have good geometric properties, in particular they may lack a good manifold structure. To avoid this problem, a more restrictive class is introduced. The variety $Z(P_1,...,P_{m})$ is said to be a transverse complete intersection if
\begin{equation} \label{ci}
\nabla P_1(x) \wedge ... \wedge \nabla P_{m}(x) \neq 0, \qquad \forall x \in Z(P_1,...,P_{m})
\end{equation}
A transverse complete intersection $Z(P_1,...,P_{m})$ is a smooth $n-m$-dimensional manifold. In particular, if $n=m$, we have the following result. 
\begin{theo} [Theorem 5.8, \cite{Gu-II}/Theorem 5.2 \cite{CKW}] \label{ftci}
A transverse complete intersection $Z(P_1,...,P_{n})$ is a finite set of cardinality at most  $\Pi_{i=1}^n deg P_i$. 
\end{theo}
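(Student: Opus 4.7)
The statement is the classical Bezout inequality for transverse zero-dimensional systems, and the cleanest strategy is to pass from $\R^n$ to $\mathbb{CP}^n$. My plan proceeds in two steps.

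First, I would promote the real transversality to complex isolation. At any $x \in Z(P_1,\ldots,P_n)$, the Jacobian matrix $J(x) := (\partial_j P_i(x))_{i,j=1,\ldots,n}$ has nonzero determinant, since this is equivalent to \eqref{ci}. Viewing $\Phi(z) := (P_1(z),\ldots,P_n(z))$ as a holomorphic map $\C^n \to \C^n$, the complex derivative $d\Phi(x)$ equals $J(x)$ and is invertible. By the holomorphic inverse function theorem, $x$ is an isolated point of the complex variety $Z_\C := \{z \in \C^n : P_i(z)=0\ \forall i\}$. In particular $Z(P_1,\ldots,P_n) \subset Z_\C$ consists entirely of isolated points of $Z_\C$.

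Second, I would apply the projective Bezout theorem. Homogenize each $P_i$ to a polynomial $\tilde P_i \in \C[X_0,\ldots,X_n]$ of degree $d_i := \deg P_i$ and consider the intersection of the $n$ hypersurfaces $\{\tilde P_i = 0\}$ in $\mathbb{CP}^n$. Bezout's theorem asserts that the total intersection number, counted with scheme-theoretic multiplicity and including points on the hyperplane at infinity, equals exactly $\prod_{i=1}^n d_i$. Each isolated point of $Z_\C$ in the affine chart $\{X_0 \neq 0\}$ contributes at least $1$ to this count, so the number of isolated complex affine zeros is at most $\prod_{i=1}^n d_i$. Combining this with the first step yields $|Z(P_1,\ldots,P_n)| \leq \prod_{i=1}^n d_i$, and finiteness is immediate.

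The only substantive content beyond bookkeeping is the projective Bezout theorem, which is standard algebraic geometry, so the main obstacle is more notational than mathematical: one has to be careful that scheme-theoretic multiplicities of points at infinity (and of any non-reduced affine components that might arise if the hypotheses are weakened) do not shrink the contribution of the real zeros below the declared bound. An elementary alternative proceeds by induction on $n$ using iterated resultants, but handling non-generic real configurations requires an auxiliary perturbation argument, which makes the complex-projective route preferable.
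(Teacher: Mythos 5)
First, a remark on the comparison itself: the paper offers no proof of Theorem \ref{ftci} — it is quoted verbatim from \cite{Gu-II} (Theorem 5.8) and \cite{CKW}, so your argument can only be judged on its own terms. Your Step 1 is correct: at a real common zero the complex Jacobian coincides with the real one, so \eqref{ci} plus the holomorphic inverse function theorem makes every real zero an isolated point of the complex variety $Z_\C$.

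The gap is in Step 2. The equality form of projective Bezout that you invoke ("the total intersection number, counted with scheme-theoretic multiplicity and including points at infinity, equals exactly $\prod_{i=1}^n \deg P_i$") is only valid when the $n$ homogenized hypersurfaces meet properly, i.e.\ in a zero-dimensional subscheme of $\mathbb{CP}^n$. The hypothesis \eqref{ci} constrains the system only at its \emph{real} zeros, so nothing prevents $Z_\C$, or its projective closure, from containing positive-dimensional components located at non-real affine points or at infinity. A concrete example with $n=2$: $P_1=x\,(x^2+y^2+1)$, $P_2=y\,(x^2+y^2+1)$ is a transverse complete intersection (its only real zero is the origin, where the gradients are $(1,0)$ and $(0,1)$), yet $Z_\C$ contains the complex curve $x^2+y^2+1=0$, so the projective intersection is one-dimensional and the classical equality statement simply does not apply; contrary to your closing remark, this is not a phenomenon that appears only "if the hypotheses are weakened". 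The correct tool is the refined (excess-intersection) Bezout inequality: for $n$ hypersurfaces of degrees $d_1,\dots,d_n$ in $\mathbb{CP}^n$, the number of isolated points of the intersection is at most $\prod_i d_i$ even when positive-dimensional components are present (Fulton, \emph{Intersection Theory}, Example 12.3.1; equivalently the affine Bezout inequality for isolated solutions). With that substitution your argument closes, since by Step 1 every real zero is an isolated point of $Z_\C$, and finiteness follows from the bound. Alternatively one can salvage the classical statement by a perturbation in the spirit of Lemma \ref{lci} (generic constants added to the $P_i$ make the affine complex solution set finite and nondegenerate, while the nondegenerate real zeros persist and remain distinct), but note that adding constants does not alter the top-degree forms and hence cannot remove excess components at infinity, so even this route ultimately rests on an inequality-form Bezout for affine (isolated) solutions rather than on the projective equality you cite.
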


From Sard's theorem it follows that
 generically the set of common zeroes of a family of polynomials is a transverse complete intersection; the precise statement is made below. 

\begin{lema}[Lemma 5.1, \cite{Gu-II}] \label{lci}
If P is a polynomial on $\R^n$, then for almost every $c_0 \in \R$, $Z(P+c_0)$ is a transverse complete intersection.
More generally, suppose that $Z(P_1,...,P_{m})$ is a transverse complete intersection and that $P$ is another polynomial. Then for almost every $c_0 \in \R$, 
$Z(P_1,...,P_{m},P+c_0)$ is a complete transverse intersection. 
 \end{lema}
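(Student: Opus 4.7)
The plan is to derive both statements from Sard's theorem, applied to $P$ viewed as a smooth map into $\R$. The first statement is in fact the special case $m=0$ of the second (with the convention that $Z$ of the empty set is $\R^n$), but it is cleaner to treat it separately first to fix ideas.

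For the first claim, consider $P:\R^n\to\R$ as a smooth map. By Sard's theorem the set of critical values of $P$ has Lebesgue measure zero, so for almost every $c_0\in\R$ the value $-c_0$ is regular. For any such $c_0$ and any $x$ with $P(x)=-c_0$ we have $\nabla P(x)\neq 0$; since $\nabla(P+c_0)=\nabla P$, condition \eqref{ci} holds with $m=1$, so $Z(P+c_0)$ is a transverse complete intersection.

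For the general claim, the plan is to apply Sard on the smooth manifold $M:=Z(P_1,\ldots,P_m)$, which by hypothesis is a smooth $(n-m)$-dimensional submanifold of $\R^n$. The restriction $P|_M$ is smooth, so by Sard its set of critical values in $\R$ has measure zero; fix $c_0$ in the complement. The crucial identification I would then make is that transversality of the extended system at a point $x\in M$ with $P(x)=-c_0$ is equivalent to $-c_0$ being a regular value of $P|_M$ at $x$. Indeed, because $Z(P_1,\ldots,P_m)$ is a TCI, the vectors $\nabla P_1(x),\ldots,\nabla P_m(x)$ are linearly independent and span the normal space $N_xM$, and the differential $d(P|_M)(x)$ is just the orthogonal projection of $\nabla P(x)$ onto $T_xM$. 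Hence
\[
d(P|_M)(x)\neq 0 \iff \nabla P(x)\notin N_xM \iff \nabla P_1(x)\wedge\cdots\wedge\nabla P_m(x)\wedge\nabla(P+c_0)(x)\neq 0,
\]
which is precisely \eqref{ci} for the extended system at $x$. Combining this with the choice of $c_0$ finishes the proof.

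The only subtle step is the identification of the TCI condition with the regular-value condition for $P|_M$; the rest is a direct invocation of Sard's theorem. No obstacle of substance arises, as smoothness of $M$ and of $P|_M$ are both automatic from the hypotheses.
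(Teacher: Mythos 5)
Your argument is correct and follows exactly the route the paper indicates (and that Guth's cited proof uses): Sard's theorem applied to $P$ on $\R^n$ for the first claim, and to the restriction $P|_{Z(P_1,\ldots,P_m)}$ for the second, with the observation that regularity of the value $-c_0$ is equivalent to the wedge condition \eqref{ci} because $\nabla P_1,\ldots,\nabla P_m$ span the normal space of the transverse complete intersection. No gaps; this matches the paper's approach.
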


The most important result in this section  is the following theorem about polynomial partitioning.
\begin{theo}[Theorem 5.5, \cite{Gu-II}] \label{PP}
Suppose that $W \geq 0$ is a non-zero $L^1$ function on $\R^n$. Then, for any degree $D $ the following holds:

There is a sequence of polynomials $Q_1,...,Q_S$ with the following properties. We have $\sum_{s=1}^S deg Q_s \les D$ and $2^S \approx D^n$.
Let $P=\Pi_{s=1}^S \tilde Q_s= \Pi_{s=1}^S (Q_s+c_s)$ where $c_s \in \R$. Let $O_i$ be the open sets given by the sign conditions of $\tilde Q_s$, that is
\[
O_i=\{ x \in \R^n| Sign \ \tilde Q_s(x)=\sigma_s \}
\]
where $\sigma_s \in \{-1,+1\}$. There are $2^S \approx D^n$ cells $O_i$ and $\R^n \setminus Z(P)=\bigcup_i O_i$. 

If the constants $c_s$ are sufficiently small, then for every $O_i$ 
\[
\int_{O_i} W \approx D^{-n} \int_{\R^n} W.
\]
\end{theo}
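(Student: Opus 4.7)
The plan is to combine the polynomial ham sandwich theorem (Stone-Tukey/Borsuk-Ulam) with an iterative bisection scheme, then perturb via Lemma \ref{lci} to enforce transversality. Recall the polynomial ham sandwich theorem: given $m$ finite Borel measures on $\R^n$ that are absolutely continuous with respect to Lebesgue, there exists a real polynomial $Q$ of degree $d$ whose zero set bisects each measure, provided $\binom{d+n}{n} \geq m+1$. In particular, one can simultaneously bisect $m \les d^n$ such measures using a polynomial of degree $O(d)$. This is the only non-trivial topological input.

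Next I would construct the $Q_s$ inductively. At step $s$, suppose the sign conditions of $Q_1,\ldots,Q_{s-1}$ partition $\R^n$ (away from their zero set) into $2^{s-1}$ open cells $O_j^{(s-1)}$, each carrying $W$-mass exactly $2^{-(s-1)}\|W\|_{L^1}$. Apply polynomial ham sandwich to the $2^{s-1}$ finite measures $W\chi_{O_j^{(s-1)}}\,dx$ to produce a polynomial $Q_s$ of degree $d_s \les 2^{(s-1)/n}$ whose zero set simultaneously bisects each of them, yielding $2^s$ cells with mass $2^{-s}\|W\|_{L^1}$. After $S$ iterations one obtains $2^S$ cells, each with $W$-mass $2^{-S}\|W\|_{L^1}$, and total degree $\sum_{s=1}^S d_s \les \sum_{s=1}^S 2^{(s-1)/n} \les 2^{S/n}$; the choice $2^S \approx D^n$ gives $\sum_s d_s \les D$, matching the statement.

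The last step is to perturb. Replace each $Q_s$ by $\tilde Q_s = Q_s + c_s$, choosing the $c_s$ iteratively so that for every subset $I \subset \{1,\ldots,S\}$, the variety $Z(\{\tilde Q_i\}_{i \in I})$ is a transverse complete intersection (only subsets of size $\leq n$ are non-trivial, by Theorem \ref{ftci}). By Lemma \ref{lci}, having fixed $c_1,\ldots,c_{s-1}$, the admissible set of $c_s$ has full Lebesgue measure, so each $|c_s|$ may be taken arbitrarily small. Mass preservation then follows by continuity: the symmetric difference between $\{Sign\,\tilde Q_s = \sigma_s\}$ and $\{Sign\,Q_s = \sigma_s\}$ is contained in $\{|Q_s| \leq |c_s|\}$, a set shrinking to the Lebesgue-null variety $Z(Q_s)$ as $c_s \to 0$; since $W \in L^1$, dominated convergence gives $\int_{O_i} W \to 2^{-S}\|W\|_{L^1} \approx D^{-n}\|W\|_{L^1}$, as claimed.

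The main obstacle is calibrating the perturbations: the $c_s$ must be generic enough to realize transverse complete intersections for all $O(2^{S})$ relevant subvarieties simultaneously, yet small enough that no cell loses an appreciable fraction of its $W$-mass. Both constraints are open/generic, so they can be met, although making the smallness quantitative requires using absolute continuity of $W\,dx$ to control $\int_{|Q_s| \leq |c_s|} W$ uniformly across $s$ — a routine application of dominated convergence given that each $Z(Q_s)$ is Lebesgue-null.
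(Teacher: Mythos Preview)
The paper does not prove Theorem~\ref{PP}; it is quoted verbatim from Guth \cite{Gu-II} as an imported tool, with no argument supplied here. Your proposal reproduces the standard proof from \cite{Gu-II}: iterated polynomial ham sandwich to bisect cells, the geometric-series degree count $\sum_s 2^{(s-1)/n}\les 2^{S/n}\approx D$, and a perturbation step via Lemma~\ref{lci} together with dominated convergence to retain approximate equidistribution of mass. This is correct and matches Guth's approach; there is nothing to compare against in the present paper.

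One minor remark: the transversality you arrange for all subsets $I\subset\{1,\ldots,S\}$ is stronger than what the theorem statement itself demands (it only asserts the cell decomposition and mass balance). The paper invokes transversality of the individual $Z(\tilde Q_s)$ in the paragraph following the theorem, and later the simultaneous transversality of $Z(P_1,\ldots,P_m,\tilde Q_s)$; both are covered by your inductive generic choice of $c_s$, so no harm done.
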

From Lemma \ref{lci} it follows that for almost every $c_s$, $Z(\tilde Q_s)$ is a complete intersection for each $s$, which in turn implies that $Z(P)$ is generically a finite union of transverse complete intersections. More generally, if $Z(P_1,...,P_{m})$ is a transverse complete intersection and $m \leq n-1$,
then for a generic choice of constants $c_s$, $Z(P_1,...,P_{m}, \tilde Q_s)$ is a transverse complete intersection for every $s$. 

The next result is helpful in controlling the tangent plane of a variety. Here we assume that the $n-m$-dimensional variety $Z=Z(P_1,...,P_{m})$ is an  transverse complete intersection and that $m \leq n-1$. Given $w \in \Lambda^{n-m} \R^n$, we define $Z_w$ by
\begin{equation} \label{Zw}
Z_w: =\{ x \in Z | \nabla P_1(x) \wedge ... \wedge \nabla P_{m}(x) \wedge w = 0\}. 
\end{equation}
The expression $g_w(x):= \nabla P_1(x) \wedge ... \wedge \nabla P_{m}(x) \wedge w$ is a polynomial with degree at most $Deg P_1 + ...+Deg P_{m}$, therefore $Z_w$ is an algebraic variety. The following result states that, generically, $Z_w$  is a a smooth transverse complete intersection.

\begin{lema}[Lemma 5.6, \cite{Gu-II}] \label{Lw}
For almost every $w \in \Lambda^{n-m} \R^n$, $Z_w= Z(P_1,...,P_{m},g_w)$ is a smooth transverse complete intersection. 
\end{lema}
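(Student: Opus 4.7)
The plan is to reduce the claim to a parametric transversality (parameterized Sard) argument. Since $Z = Z(P_1,\ldots,P_m)$ is assumed to be a transverse complete intersection, it is a smooth $(n-m)$-dimensional submanifold of $\R^n$ and the $m$-vector $\omega(x) := \nabla P_1(x) \wedge \cdots \wedge \nabla P_m(x)$ is nonzero at every $x \in Z$. Saying that $Z_w = Z(P_1,\ldots,P_m,g_w)$ is a transverse complete intersection means exactly that $\nabla P_1 \wedge \cdots \wedge \nabla P_m \wedge \nabla g_w \neq 0$ at every point of $Z_w$. Since the first $m$ gradients are already linearly independent on $Z$, this reduces to checking $\nabla g_w(x) \notin \mathrm{span}(\nabla P_1(x),\ldots,\nabla P_m(x)) = (T_x Z)^\perp$, i.e.\ that $0$ is a regular value of the restricted polynomial map $g_w|_Z : Z \to \R$.

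To arrange this for almost every $w$, I would consider the joint evaluation map
\[
H : Z \times \Lambda^{n-m}\R^n \longrightarrow \R, \qquad H(x,w) := g_w(x) = \omega(x) \wedge w,
\]
using the identification $\Lambda^n \R^n \cong \R$. The map $H$ is smooth (polynomial in $x$, linear in $w$). For any fixed $x \in Z$, the partial map $w \mapsto H(x,w)$ is a linear functional on $\Lambda^{n-m}\R^n$; it is nonzero because the wedge pairing $\Lambda^m\R^n \times \Lambda^{n-m}\R^n \to \Lambda^n\R^n$ is non-degenerate and $\omega(x) \neq 0$ on $Z$. Consequently $\partial_w H$ is surjective onto $\R$ at every point, so $H$ is a submersion and, in particular, $0$ is a regular value of $H$.

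The rest is standard. By the preimage theorem, $H^{-1}(0)$ is a smooth submanifold of $Z \times \Lambda^{n-m}\R^n$. Applying Sard's theorem to the smooth projection $\pi : H^{-1}(0) \to \Lambda^{n-m}\R^n$ shows that its critical values form a set of Lebesgue measure zero in $\Lambda^{n-m}\R^n$. For every regular value $w$ of $\pi$, a routine unwinding of the implicit function theorem gives that $0$ is a regular value of $g_w|_Z$, which by the reduction in the first paragraph is precisely the condition that $Z_w$ is a transverse complete intersection (with the case $Z_w = \emptyset$ trivially included). This parallels exactly how Lemma \ref{lci} (the one-parameter constant-shift version) is proved, except that here one must vary $w$ through the full space $\Lambda^{n-m}\R^n$ rather than a single real parameter in order to secure surjectivity of $\partial_w H$.

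The only genuinely content-bearing step is the verification that $\partial_w H$ is everywhere surjective, and this is immediate from the non-degeneracy of the wedge pairing together with the transverse complete intersection hypothesis on $Z$; no serious obstacle is expected.
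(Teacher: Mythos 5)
Your argument is correct: the reduction of the transverse complete intersection condition for $Z_w$ to ``$0$ is a regular value of $g_w|_Z$'' is exactly right, and the parametric Sard/transversality step works because $w \mapsto \nabla P_1(x)\wedge\cdots\wedge\nabla P_m(x)\wedge w$ is a nonzero linear functional at every $x \in Z$ (non-degeneracy of the wedge pairing plus the hypothesis on $Z$), so the joint map is a submersion and almost every $w$ is a regular value of the projection. The paper itself offers no proof — the lemma is quoted from Guth \cite{Gu-II} — and your parametric transversality argument is essentially the standard one underlying the cited result, so there is nothing to reconcile.
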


The last result we need from \cite{Gu-II} provides a useful tool for controlling the transverse intersections between a tube and a variety.
Given a direction $v_0 \in \R^n$ with $|v_0|=1$, a radius $r$ and a point $x_0 \in \R^n$, we let $T(x_0,v_0,r)=\{x_0+tv_0+e: t \in \R, e \in \R^n, |e| \leq r \}$
be the tube that passes through $x_0$, has direction $v_0$ and has radius $r$.

Suppose $Z \subset \R^n$ is a transverse complete intersection; we define the following two sets:
\[
Z_{v_0, \leq \alpha}= \{z \in Z: \angle(v_0, T_z Z) \leq \alpha\}, \qquad  Z_{v_0, > \alpha} = Z \setminus Z_{v_0, \leq \alpha}.
\]
We have the following result. 
\begin{lema}[Lemma 5.7, \cite{Gu-II}] \label{Zan}
Suppose that $Z=Z(P_1,...,P_{m})$ is a transverse complete intersection and that the polynomials $P_i, i=1,..,m$ have degrees at most $D$. Let 
$T=T(x_0,v_0,r)$ be a tube as above. Then for any $\alpha >0$, $Z_{v_0,>\alpha} \cap T$ is contained in a union of $\les D^n$ balls of radius $\ges r\alpha^{-1}$.
\end{lema}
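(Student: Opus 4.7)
The plan combines two ingredients from real algebraic geometry: a Milnor--Thom-type bound on the number of connected components of a semi-algebraic set, and a quantitative transversality estimate coming from the defining angular condition. After a rigid motion I may assume $v_0=e_n$ and $x_0=0$, so that $T=\{(x',x_n):|x'|\leq r\}$. The angular condition $\angle(v_0,T_zZ)>\alpha$ is equivalent to the statement that the projection of $e_n$ onto the normal space $N_zZ=\mathrm{span}(\nabla P_1(z),\dots,\nabla P_m(z))$ has norm at least $\sin\alpha$, which can be encoded as the polynomial inequality
\[
|\nabla P_1(z)\wedge\dots\wedge\nabla P_m(z)\wedge e_n|\ \geq\ \sin\alpha\cdot|\nabla P_1(z)\wedge\dots\wedge\nabla P_m(z)|,
\]
of degree $O(D)$ in the coordinates of $z$.

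The first step is to bound the number of connected components of $S:=Z_{v_0,>\alpha}\cap T$ by $\les D^n$. Since $S$ is cut out by finitely many polynomial equalities and inequalities of degree $O(D)$ in $\R^n$ (the tube condition $|x'|^2\leq r^2$ being itself polynomial), the Oleinik--Petrovsky--Milnor--Thom theorem yields that the sum of Betti numbers of $S$ is $\les D^n$, and in particular so is its number of connected components. The second step is to show that each connected component $C$ of $S$ has diameter $\les r/\alpha$, so that $C$ fits inside a ball of that radius. The $v_0^\perp$-diameter is $\leq 2r$ from the tube constraint, so only the $v_0$-extent requires work. At every $z\in C$ the angular condition gives that each unit tangent vector $u\in T_zZ$ satisfies $|u\cdot v_0|\leq\cos\alpha$, and locally near $z$ the variety $Z$ is a graph $x_n=g(x')$ over a subset of $v_0^\perp$ with $|\nabla g|\leq\cot\alpha$. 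Tracking these local graph pieces across $C$ gives that the rate of change of $x_n$ is at most $\cot\alpha$ times the rate of change in $v_0^\perp$, so the $v_0$-extent of $C$ is bounded by $\les r\cot\alpha\les r/\alpha$.

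The main obstacle is making this last step uniform: a naive path integration yields only $|\Delta x_n|\leq(\text{arc-length})\cdot\cos\alpha$, and the intrinsic length of an algebraic curve of degree $D$ inside a bounded tube is not directly controlled by $r$. To circumvent this I would argue via the projection $\pi:\R^n\to v_0^\perp$: the local graph property realizes $C$ as an étale cover of its shadow $\pi(C)\subseteq B_r$, whose degree is bounded by $D$ (using Bezout on the lines $p+\R v_0$ intersected with $Z$, which generically have $\leq D$ intersection points). On each sheet the graph bound $|\nabla g|\leq\cot\alpha$ forces a $v_0$-extent $\les r/\alpha$; different sheets of the same connected component can be glued only at boundary points where $\angle(v_0,T_zZ)=\alpha$, and these lie on the boundary of $Z_{v_0,>\alpha}$ so do not enlarge the $v_0$-extent. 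Combining the $\les D^n$ bound on components with the per-component diameter bound produces the desired cover by $\les D^n$ balls of radius $\ges r/\alpha$.
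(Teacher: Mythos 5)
This lemma is not proved in the paper at all: it is quoted from Guth \cite{Gu-II} (Lemma 5.7), so the only issue is whether your argument stands on its own. It does not. The decisive gap is your second step, the claim that every connected component of $Z_{v_0,>\alpha}\cap T$ has diameter $\lesssim r\alpha^{-1}$. The pointwise information at points of $Z_{v_0,>\alpha}$ — that unit tangent vectors $u$ satisfy $|u\cdot v_0|\leq\cos\alpha$, equivalently that locally the $v_0$-coordinate is a function of the projection to $v_0^{\perp}$ with gradient $\leq\cot\alpha$ — only bounds the growth of the $v_0$-coordinate by $\cot\alpha$ times the \emph{length of the projected path} in $v_0^{\perp}$, and that length is not controlled by $r$: the shadow sits in a ball of radius $r$, but a path inside it may wind many times. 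Concretely, a component can spiral around the axis of $T$ like a helix of radius $\leq r$; the condition $\angle(v_0,T_zZ)>\alpha$ only forces the pitch of each turn to be $\lesssim r\alpha^{-1}$, while the number of turns of an algebraic component is bounded only in terms of the degree (Bezout against a half-plane through the axis), so a single connected component can remain inside the (infinite) tube, remain $\alpha$-transverse to $v_0$, and have extent of order $D\,r\alpha^{-1}$ or worse along $v_0$. This is exactly why the lemma is formulated as a covering by $\lesssim D^{n}$ balls and not as a per-component diameter bound. Your attempted repair does not close this: the sheets of a connected component over its shadow are glued by monodromy around loops in the shadow, not only at points where the angle equals $\alpha$, so their $v_0$-extents accumulate; moreover the number of sheets over a generic line $p+\R v_0$ is bounded by the degree of $Z$, which for a complete intersection of $m$ polynomials of degree $\leq D$ is $\leq D^{m}$ by Bezout, not $\leq D$. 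Even on your own accounting the scheme would give at best $\lesssim D^{n}$ components each needing many balls of radius $r\alpha^{-1}$, which overshoots the stated bound $\lesssim D^{n}$.

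The first step is essentially sound: $Z_{v_0,>\alpha}\cap T$ is semialgebraic, cut out by the $m\leq n$ equations $P_i=0$, the quadratic tube inequality, and one angle inequality of degree $O_n(D)$, so an Oleinik--Petrovsky--Thom--Milnor type bound does give $\lesssim_n D^{n}$ connected components. (One slip there: $|\nabla P_1\wedge\dots\wedge\nabla P_m\wedge v_0|=|\nabla P_1\wedge\dots\wedge\nabla P_m|\cdot|\pi_{T_zZ}v_0|$ measures the \emph{tangential} component of $v_0$, so the condition $\angle(v_0,T_zZ)>\alpha$ corresponds to this wedge being at most $\cos\alpha\,|\nabla P_1\wedge\dots\wedge\nabla P_m|$, not at least $\sin\alpha$ times it; harmless for the component count, but the encoding as written defines a different set.) What is missing is a genuinely global counting mechanism — for instance, controlling how many $r\alpha^{-1}$-separated slabs transverse to $v_0$ the set $Z_{v_0,>\alpha}\cap T$ can meet — rather than a diameter bound for individual components; this is the content of Guth's argument that the paper imports, and your proposal does not supply a substitute for it.
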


\subsection{Tangent and non-tangent tubes to an algebraic variety} \label{sat}

In this section we assume that $Z=Z(P_1,..,P_{n-m})$ is a transverse complete intersection. 
We are given three parameters, sizes $R$ and $\rho$, and angle $\alpha$, satisfying the condition $R^{-\frac12+\delta} \ll \alpha \ll 1$. We assume we have a family of wave packets adapted to the ball $B_R$. Given a ball $B \subset B_R$ of size $\rho$, first we select the tubes that intersect $N_{\alpha R}(Z) \cap B$, that is we consider tubes $T$ such that $T \cap N_{\alpha R}(Z) \cap B \neq \emptyset$. 
These tubes are divided into two classes: tangential and non-tangential. A tube is called tangential if for any $x \in T$ and $y \in Z \cap 2B$ with 
$|x-y| \leq 2 \alpha R$,
\begin{equation} \label{tangc2}
\angle(T,T_y Z) \leq \alpha.
\end{equation}

A tube is called non-tangential if it is not tangential, that is there exists $x \in T$ and $y \in Z \cap 2B$ with $|x-y| \leq 2 \alpha R$ such that $\angle(T,T_y Z) > \alpha$. 
This implies 
\begin{equation}
N_{2 \alpha R} (T) \cap 2B \cap Z_{v(T), > \alpha} \neq \emptyset. 
\end{equation}

In \cite{Gu-II} the definition of tangent tubes has the additional property that $T \cap B_R \subset N_{2\alpha R} (Z) \cap B_R$. While, in that particular context, it is not clear to us the consistency of properties listed for tangential and transverse tubes, such a property can be recovered along the lines of a similar argument used by Guth in the proof of Lemma 4.9 in \cite{Gu-I}. 

\section{Main Argument} \label{sma}

The main result of this paper follows from the following

\begin{prop} \label{indp}
Assume the setup in Theorem \ref{mainT}.
 Then for any $\epsilon >0$, the following holds true: for any $R >0$,
\begin{equation}  \label{mrei-ind}
\| \Pi_{i=1}^k \calE_i f_i \|_{L^p(B_R)} \lesssim C(\epsilon,p) R^{ \epsilon} 
 \Pi_{i=1}^k \| f_{i} \|_{L^2(U_i)}.  
\end{equation}
\end{prop}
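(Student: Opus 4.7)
I would prove Proposition \ref{indp} by induction on $R$. Fix $\epsilon > 0$ and $p > p(k)$; the inductive hypothesis is that \eqref{mrei-ind} holds at every scale $R' < R$, with a constant slightly smaller than the one I aim to establish at scale $R$, so the eventual $R^\epsilon$ bound closes by summing a geometric series in scales. After localizing to $B_R$, I decompose each $f_i$ through the wave packet machinery of Section \ref{swp} into pieces $f_{i,T}$ supported (essentially) in thin tubes of length $R$ and width $R^{(1+\delta)/2}$. To the weight $W = |\Pi_{i=1}^k \calE_i f_i|^p\, \chi_{B_R}$ I then apply Guth's polynomial partitioning (Theorem \ref{PP}) at degree $D = R^{\epsilon_1}$, for a small $\epsilon_1(\epsilon)$, obtaining $\approx D^n$ cells $\calO_j$ with roughly equal mass, separated by a transverse complete intersection $Z = Z(P)$ of degree $\les D$ (we may assume transversality using Lemma \ref{lci}).

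The standard cellular/algebraic split then takes over. In the \emph{cellular} case, where most mass of $W$ lies in $\bigcup_j (\calO_j \setminus N_{R^{1/2+\delta}}(Z))$, I note that each tube $T$ meets only $O(D)$ cells (B\'ezout), redistribute the wave packets among cells and apply the inductive hypothesis at scale $R$ on each cell. Gaining a factor of $D^{-n/p}$ from the equidistribution and losing at worst a factor of $D^n$ from summing, the net gain is positive precisely because $p > p(k)$. In the \emph{algebraic} case I descend to smaller balls $B_\rho \subset N_{R^{1/2+\delta}}(Z) \cap B_R$, iterating the partitioning as in \cite{Gu-II} until I am reduced to controlling $\Pi_i \calE_i f_i$ on some ball $B$ of radius $\rho$, at an angle scale $\alpha$ satisfying $R^{-1/2+\delta} \ll \alpha \ll 1$, where the mass lies in $N_{\alpha R}(Z) \cap B$.

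At this reduced scale I invoke the tangent/transverse tube decomposition of Section \ref{sat}. The transverse tubes from each $\calT_i(B_R)$ passing through $B$ are few by Lemma \ref{Zan}, so their contribution is bounded by the induction hypothesis at scale $\rho$ together with tube-bookkeeping at scale $B$. The core of the argument is the \emph{tangent} contribution. For each tangent tube $T \in \calT_i$, the direction $v(T)$ lies within angle $\alpha$ of $T_y Z$ for some $y \in Z \cap 2B$; translating this through the Gauss map identification $v(T) \leftrightarrow N_i(\xi_T)$ shows that the frequencies $\xi_T$ producing tangent packets have normals clustered near a fixed $(n-m)$-plane $\calH = T_y Z$, where $m = \mathrm{codim}\, Z$. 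This is exactly the hypothesis \eqref{ph} of Lemma \ref{kL}, with the role of $V_\zeta$ played by the subspace of $T_\zeta S_i$ on which $\eqref{curva}$ is asserted; the curvature condition \eqref{curva} furnishes \eqref{Sv}. Lemma \ref{kL} then forces the relevant portion of $S_i$ to be contained in a $\tilde\mu$-neighborhood of a codimension-$m$ submanifold $S_i' \subset S_i$, with $\tilde\mu \les \alpha$, and with the transversality \eqref{normal6} for the normal planes $N_\zeta S_i'$ delivered by part (ii) of Lemma \ref{kL} together with condition \eqref{trans}.

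With this localization in hand, Theorem \ref{MB} gives the generic estimate for the tangent contribution on $B$ at the endpoint $L^{2/(k-1)}$ with a gain of $\Pi_i \tilde\mu_i^{1/2} \approx \alpha^{mk/2}$, and a factor $\rho^\epsilon$. Interpolating this against the trivial $L^\infty$ bound $\| \calE_i f_i\|_\infty \les \|f_i\|_2$ puts me at the desired exponent $p$; a straightforward parameter count---where the gain $\alpha^{mk/2}$ competes with the parameter $\alpha$ coming from the neighborhood $N_{\alpha R}(Z)$ and with the $D$-loss from partitioning---closes the induction precisely because $p > p(k)$ (this is the exponent at which the curvature gain $\alpha^{mk/2}$ exactly balances the neighborhood thickness when $m$ ranges over $1, \ldots, n-k$). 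Finally, Proposition \ref{epsr} converts the $R^\epsilon$ loss into the clean statement of Theorem \ref{mainT}. The main obstacle I anticipate is in Step 6--7: verifying that the geometric reduction in Lemma \ref{kL} can be applied uniformly over the tangent tubes, keeping track of the $m$-dependent codimension produced by the polynomial partitioning, and ensuring that the $\alpha^{mk/2}$ gain in Theorem \ref{MB} matches exactly the $p(k)$ threshold---this requires the curvature condition \eqref{curva} to be used not just once but in concert with the specific algebraic-geometric structure of $Z$.
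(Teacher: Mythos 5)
Your overall architecture (wave packets, polynomial partitioning, cellular/algebraic dichotomy, tangent tubes feeding curvature information through Lemma \ref{kL} into Theorem \ref{MB}, then $\epsilon$-removal via Proposition \ref{epsr}) matches the paper, but the step you yourself flag as the crux contains a genuine gap. You claim that tangency localizes \emph{every} wave near a codimension-$m$ submanifold and that Theorem \ref{MB} then yields a gain $\Pi_i \tilde\mu_i^{1/2} \approx \alpha^{mk/2}$, which you use to balance the $p(k)$ threshold. This is not available: Condition \ref{C} forces $c_1+\cdots+c_k \leq n-k$, which for $k=n-1$ means the \emph{total} codimension of localization over all waves is at most $1$, so Theorem \ref{MB} can deliver at most a single factor $\mu^{1/2}$, never $\alpha^{mk/2}$. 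Moreover only the non-transversal (tangent) wave can be localized at all --- the $m-1$ transversal waves are automatically non-tangential --- and even for the tangent wave one cannot use $\calH=T_{z_0}Z$ of codimension $m$ directly, because some directions of $(T_{z_0}Z)^\perp$ may be nearly parallel to $N_1,\ldots,N_{m-1}$ and then \eqref{ph} fails; the paper has to pass to the one-dimensional space $\calH^\perp=(T_{z_0}Z)^\perp\cap N_1(\bar\zeta_1)^\perp\cap\cdots\cap N_{m-1}(\bar\zeta_{m-1})^\perp$ and apply Lemma \ref{kL} to produce a codimension-\emph{one} submanifold $\tilde S_m'$ with $\tilde S_m\subset B_{c_1\rho^{-1/2+\delta_1}}(\tilde S_m')$. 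Consequently the numerology you propose does not close as stated: the actual gain is only $(\rho^{-1/2+\delta_1})^{1/2}$ on each ball $B_{R^{1/2+\delta_0}}(x_0)$, and reaching $p(k)$ requires interpolating the resulting $L^{2/(n-2)}$ bound with the trivial $L^{2/(n-1)}$ bound $\|\Pi_i\calE_i f_i\|_{L^{2/(n-1)}(B_R)}\lesssim R^{(n-1)/2}\Pi_i\|f_i\|_{L^2}$; interpolating against $L^\infty$, as you suggest, moves you to exponents above $2/(k-1)$, whereas $p(k)<2/(k-1)$.

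Two further points where your sketch diverges from what is actually needed. First, after applying the localized estimate on balls of radius $R^{1/2+\delta_0}$ you must sum the products of local masses over a cover of $N_m$; this is the inequality \eqref{aux30}, a nontrivial $l^{2/(k-1)}$ summation at the level of the generic multilinear estimate, which your proposal omits entirely. Second, the paper runs the partitioning with degrees $D_1\leq\cdots\leq D_m$ that are absolute constants independent of $R$, iterating through a hierarchy of transverse complete intersections of increasing codimension (with the transversal-wave count relative to the plane $V$ deciding whether one gets a good estimate, a codimension reduction, or the tangent-tube case, plus the special case $m=n$ and the margin bookkeeping that legitimizes the same-scale bootstrap in the cellular case). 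Your choice $D=R^{\epsilon_1}$ and the phrase ``iterating as in \cite{Gu-II}'' gloss over this structure; the iteration and the constant-degree bookkeeping are what make the recursive inequality for $A(R)$ close with only an $R^{O(\epsilon)}$ loss.
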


Indeed, by invoking Proposition \ref{epsr} we can remove the $\epsilon$-loss above in the regime $p>\frac{2(n+k)}{k(n+k-2)}$ 
(notice that $\frac{2(n+k)}{k(n+k-2)} > \frac2{k-1}$) and obtain the claim in Theorem \ref{mainT}.

We proceed with the proof of Proposition \ref{indp}. The main idea is the use of an induction on scale type argument that involves the polynomial partitioning along the lines developed by Guth in \cite{Gu-II}. 

We let $A(R)$ be the best constant for which the following inequality
\begin{equation}  \label{mrei-ii}
\| \Pi_{i=1}^k \calE_i f_i \|_{L^p(B_R)} \leq A(R) \Pi_{i=1}^k \| f_{i} \|_{L^2(U_i)},
\end{equation}
holds true for any ball $B_R \subset \R^n$ of radius $R$. 

There is one important concept that is missing from the above definition. The argument will be based on induction on scales; this requires various localizations  
on the physical side and that impacts the localization on the Fourier side. This motivates the use of the margin concept which we introduce below. 
We assume that, for each $i$, we are given a reference set $V_i$ inside which we want to keep all functions supported. 
If $f_i$ is supported in $U_i \subset V_i$ we define the margin of $f_i$ relative to $V_i$ by
\[
\mbox{margin}^i (f_i) := \mbox{dist}(\mbox{supp} (f_i), V_i^c). 
\]
The idea is that instead of keeping $U_i$ fixed as stated in \eqref{mrei-ii}, we allow it to vary in the following sense:  
we let $A(R)$ be the best constant for which the following inequality
\begin{equation} 
\| \Pi_{i=1}^k \calE_i f_i \|_{L^p(B_R)} \leq A(R) \Pi_{i=1}^k \| f_{i} \|_{L^2},
\end{equation}
holds true for any ball $B_R \subset \R^n$ of radius $R$ and any $f_i$ whose margin satisfies $\mbox{margin}^i(f_i) \geq m-R^{-\frac14}$; here $0 < m \ll 1$ 
is meant to be a very small constant. In the argument below we ignore the margin concept so as to focus our attention on the essential parts of the argument;
 in Section \ref{concl}, where we put together all our estimates to obtain the final bound on $A(R)$ above, we will explain how the margin concept can be easily inserted back into the full argument. 

For a Lebesgue measurable set $A$ we define the measure 
\[
\mu(A)= \int_C  |\Pi_{i=1}^k \calE_i f_i (x) |^p dx.
\]
When we work within a specific ball $B_R$ we employ the wave packet decomposition adapted to it, that is with $\calT(B_R)$; however to keep notation simple, this will be implicit in the exposition and we simple use the notation $\calT$ for the set of wave packets used. 
Let $C \subset B_R$ be a Lebesgue measurable set and define
\[
f_{i,C}= \sum_{T \in \calT_i: T \cap C \neq \emptyset} f_{i,T}.
\]
It is an easy exercise to prove the following
\begin{equation} \label{Cloc}
\| \Pi_{i=1}^k \calE_i f_i \|_{L^p(C)} \leq A(R) \Pi_{i=1}^k \| f_{i,C} \|_{L^2(U_i)} + O(R^{-N}) \|f_i\|_{L^2(U_i)}. 
\end{equation}
We proceed with the polynomial partitioning argument. For clarity purposes, we present the starting point in the following section and then consider the more general setup 
in the succeeding section. 

\subsection{The polynomial partitioning - I} 

We apply Theorem \ref{PP} for the function $ |\Pi_{i=1}^k \calE_i f_i (x) |^p \cdot \chi_{B_R}(x)$; this produces a polynomial $P$ of degree at most $D_1$ and with all the properties provided there. For now the important properties are
$\R^n \setminus Z(P)=\bigcup_j O_j$, the cardinality of the set of $j$'s is $\approx D_1^n$ and $\mu(O_j) \approx D^{-n}_1 \mu (B_R)$.

Let us assume that the following holds true:
\begin{equation} \label{nalg1}
\mu(N_{R^{\frac12+ \delta_0}}(Z) \cap B_R) \ll D_1^{-n} \mu(B_R),
\end{equation}
which is referred to as the non-algebraic case since the measure is concentrated away from the algebraic variety. Here $\delta_0$
is a small parameter with the property $ \delta \ll \delta_0 \ll 1$. 
If we let $O_j'=O_j \setminus N_{R^{\frac12+ \delta_0}}(Z) $, the following holds true
\[
\mu(O_j') \approx D_1^{-n} \mu(B_R),
\]
for all $j$'s. For each $f_i$ and each cell $O_j'$, we collect the relevant tubes from $f_i$ that go through $O_j'$ by defining
\[
f_{i,j} = \sum_{T \in \calT_i: T \cap O_j' \neq \emptyset} f_{T}. 
\]

In \cite{Gu-I}, see Lemma 3.2, it is shown that for every $i$ and for any tube $T \in \calT_i$ of radius $R^{\frac12+\delta}$, T enters at most $D_1+1$ cells $O_j'$.
Therefore for each $i$, 
\begin{equation} \label{encount}
\sum_j \| f_{i,j} \|^2_{L^2} \lesssim D_1 \| f_{i} \|^2_{L^2}.
\end{equation}
 Since there are $\approx D_1^{n}$ cells $O_j'$ it follows that, for most cells
 \[
 \| f_{i,j} \|_{L^2}^2 \lesssim D_1^{1-n} \| f_{i} \|^2_{L^2}. 
 \]
 Assuming $D_1$ is large enough with respect to $k$, it follows that, for most cells the above inequality holds true for every $i$ as well. 
 We also make the observation that $D_1$ and later choices of degrees will be absolute constants independent of $R$. 
 By choosing such a cell $O_j'$ the following holds true
  \[
  \begin{split}
 \mu(B_R) \approx D_1^{n} \mu(O_j') & \lesssim D_1^{n}   \left( A(R)  \Pi_{i=1}^k \| f_{i,j} \|_{L^2} + O(R^{-N}) \Pi_{i=1}^k \| f_{i} \|_{L^2(U_i)}\right)^p \\
 & \lesssim \left ( A(R)^p D_1^{n} D_1^{\frac{(1-n)kp}2} + O(R^{-Np}) \right)  \Pi_{i=1}^k \| f_{i} \|_{L^2}^p. 
 \end{split}
 \]
 If the power of $D_1$ is negative, we can choose $D_1$ large enough so that $C D_1^{n+\frac{(1-n)kp}2} < 1$ ($C$ is the constant used in the definition of $\les$) and obtain a satisfactory estimate for this case:
 \begin{equation} \label{gest1}
  \mu(B_R)^\frac1p \leq (\frac12 A(R) + O(R^{-N})) \Pi_{i=1}^k \| f_{i} \|_{L^2}. 
 \end{equation}
 For this to happen we need  $p=\frac{2(n+k)}{k(n+k-2)}>\frac{2 n}{k(n-1)}$ which holds true given that $n > k$. 
 
In running the argument in the previous subsection we heavily relied on the underlying hypothesis 
\eqref{nalg1}. We now assume we are in the opposite scenario, that is
\begin{equation} \label{alg1}
\mu(N_{R^{\frac12+ \delta_0}}(Z) \cap B_R) \gtrsim D_1^{-n} \mu(B_R),
\end{equation} 
which is referred to as the algebraic case. From the structural properties of Theorem \ref{PP}, it follows that there exists a polynomial $P_1$, occurring in the factorization of $P$, such that $deg P_1 \leq D_1$, $Z(P_1)$ is a transverse complete intersection and
\begin{equation} \label{alg2}
\mu(N_{R^{\frac12+ \delta_0}}(Z(P_1)) \cap B_R) \gtrsim \frac{\mu(B_R)}{D_1^n \ln{D_1}}.
\end{equation}

\subsection{Polynomial partitioning - II} \label{pp2}

We assume that we are in a more general setup than the one we just concluded the previous subsection with and quantified in \eqref{alg2}. We introduce a few more concepts that are needed for this purpose. 

We work with a more general algebraic variety $Z=Z(P_1,..,P_{m})$ which is a transverse complete intersection. The polynomials $P_i$'s have degrees less than $D_i$, with $D_1 \leq D_2 \leq .. \leq D_m$. The only natural restriction we impose on $m$ is that $1 \leq m \leq n$. In fact the case $m=n$
is a bit special and easier and will be treated in Section \ref{special}, thus we will assume that $1 \leq m \leq n-1$.

We assume that the following holds true:
\begin{equation} \label{alg4}
\mu(N_{R^{\frac12+ \delta_0}}(Z)  \cap B_R) \gtrsim \frac{\mu(B_R)}{D_m^{\kappa_m}},
\end{equation} 
for some $\kappa_m \in \N$.

Our goal is to reduce the problem to one of the the following three scenarios: 

i) provide a good estimate as in the previous subsection;

ii) further reduce the dimensionality of the algebraic variety where the measure is concentrated (in the sense of \eqref{alg4});

iii) there is a third alternative that needs more preparation to describe. 

First we summarize the steps in \cite{Gu-II} that are necessary to set up a refined version of the polynomial partition; we aim here for a self-contained presentation, and the reader is referred to \cite{Gu-II} for additional details. 
For setting up the polynomial partition, it is necessary to locate a relevant portion of $N_{R^{\frac12+\delta_0}} Z$ where the tangent space at $Z$ is close to a specific choice.  We pick $0 < \gamma_0 \ll 1$, whose smallness depends on parameters such as $\nu$, but not $R$ or $\delta$'s. We say that  $B_{R^{\frac12+\delta_0}}(x_0) \subset N_{R^{\frac12+\delta_0}} Z$ is a regular ball if, on each connected component of $Z \cap B_{R^{\frac12+\delta_0}}(x_0)$, the tangent space  $TZ$ is constant up to an angle $\gamma_0$. In Section \ref{TAG} we introduced, for each $w \in \Lambda^{n- m} \R^n$, the algebraic variety $Z_w \subset Z$ which, for generic choice of $w$, is a transverse complete intersection defined using polynomials of degree $\leq D_1+...+ D_{ m}$. Therefore we can choose a set of $w$'s of cardinality $\lesssim 1$ so that on each connected component of $Z \setminus \bigcup_w Z_w$ the tangent space $TZ$ is constant up to an angle $\gamma_0$.

Assume that the measure is concentrated  on $N_{R^{\frac12+\delta_0}} (\cup_w Z_w) \cap B_R$, that is 
\[
\mu(N_{R^{\frac12+ \delta_0}}( \cup_w Z_w)   \cap B_R) \gtrsim \frac{\mu(B_R)}{D_m^{\kappa_m}}.
\]
Then there exists a $w$ such that the measure is concentrated on $N_{R^{\frac12+\delta_0}} (Z_w)  \cap B_R$ and then we achieved our goal: 
by Lemma \ref{Lw}, $Z_w=Z(P_1,..,P_{ m}, g_w)$ is a complete transverse intersection, thus by setting $P_{m+1}= g_w= \nabla P_1 \wedge ... \wedge \nabla P_{m}\wedge w$ we have reduced the dimensionality of the algebraic variety whose neighborhood contains a relevant portion of $\mu(B_R)$.

Assume now that  the measure is not concentrated on $N_{R^{\frac12+\delta_0}} (\cup_w Z_w)  \cap B_R$, in which case 
\[
\mu(N_{R^{\frac12+ \delta_0}}(Z \setminus \cup_w Z_w)  \cap B_R) \gtrsim \frac{\mu(B_R)}{D_m^{\kappa_m}}.
\]
For a regular ball $B=B_{R^{\frac12+\delta_0}}(x_0)$ we pick a point $z \in B \cap Z$ and define $V_B$ to be the $n- m$ tangent plane $T_z Z$. For an $n- m$ plane $V$, we let $\mathcal{B}_V$ be the set of regular balls $B$ satisfying $\angle (V_B,V) \leq \gamma_0$. With a set of $V$'s of cardinality $\lesssim 1$ we make sure each regular ball belongs to some $\mathcal{B}_V$, therefore there is an $n- m$ plane $V$ such that 
\[
\mu(\bigcup_{B \in \mathcal{B}_V} B ) \gtrsim \frac{\mu(B_R)}{D_m^{\kappa_m}}.
\]

Let $N_{m}=(\bigcup_{B \in \mathcal{B}_{V_m}} B) $ and let $\mu_{m}$ be the restriction of $\mu$ to $N_{m}$; this has the property that $\mu_{m}(N_{m}) \gtrsim \frac{\mu(B_R)}{D_m^{\kappa_m}}$.

Now that we froze the angle of $TZ$, we can introduce the transversal waves. Let $V_{\geq \gamma}=\{ v \in \R^n: \angle (v,V) > \gamma\}$ and 
$V_{\leq  \gamma}=\{ v \in \R^n: \angle (v,V) \leq \gamma\}$. 
 We split each $f_i$ as follows
\[
f_i = f_{i,ntrans} + f_{i,trans},
\]
where
\[
f_{i,trans}=\sum_{T \in \calT_i: v(T) \in V_{> 4 \gamma_0}} f_{i,T}, \qquad f_{i,ntrans}= \sum_{T \in \calT_i: v(T) \in V_{\leq  4 \gamma_0}} f_{i,T}. 
\]
Then we have 
\[
\| \Pi_{i=1}^k \calE_i f_{i} \|_{L^p(N_{m})}^p \lesssim  \sum_{car} \| \Pi_{i=1}^k \calE_i f_{i,car_i} \|_{L^p( N_{m})}^p 
\]
where $\sum_{car}$ runs over all possible combinations $car=(car(1),..,car(k)) \in \{ntrans,trans\}^k$. 

One of these combinations on the right-hand side is dominant, that is there is a choice $car \in \{ntrans,trans\}^k$ such that
\begin{equation} \label{car1}
 \| \Pi_{i=1}^k \calE_i f_{i} \|_{L^p(N_{m})}^p \lesssim  \| \Pi_{i=1}^l \calE_i f_{i,car_i} \|_{L^p( N_{m})}^p, 
\end{equation}
and it suffices to estimate this combination. 

Assume that in such a combination, we have $car_i=trans$ for at least $m$ waves; without restricting the generality of the argument we can assume $car_i=trans$ for $1 \leq i \leq m$. The rest of this subsection will analyze this case and the conclusion will be the following: we can still apply the polynomial partitioning and either be able to use induction on scales or further reduce the setup to where the measure is concentrated in the neighborhood $N_{R^{\frac12+\delta}} Z$ where $Z=Z(P_1,..,P_{m}, P_{m+1})$ is a complete transverse intersection. 

Next we proceed with the polynomial partition argument. We let $\pi: \R^n \rightarrow V$ be the orthogonal projection on $V$ and we let $P_V : V \rightarrow \R$ 
denote a polynomial defined on $V$. We apply the Theorem \ref{PP} to the push-forward measure $\pi_* \mu_m$ on $V$, using the degree $D_{ m +1}$. 
This gives us a polynomial $P_V$ of degree at most $D_{ m +1}$ so that $V \setminus Z(P_V)= \bigcup_j O_{V,j}$, the number of cells $O_{V,j}$ is 
$\approx D_{m+1}^{(n-m)}$ and for each cell $\pi_*(O_{V,j}) \approx D_{m+1}^{-(n- m)} \mu_m(V)$. We also get the structural property $P_V = \Pi_l Q_{V,l}$ where each $Q_{V,l}$ can be perturbed with small constant terms that can be used for transversality purpose. 

Next we extend all the above entities from $V$ to $\R^n$.  $P_V$ is then extended to a polynomial $P$ on $\R^n$ by setting $P(x):= P_V(\pi(x))$. This has the property that $Z(P)=\pi^{-1}(Z(P_V))$. We let $O_j :=\pi^{-1} (O_{V,j})$ and note that $\R^n \setminus Z(P)=\bigcup_j O_j$ and that $\mu_m(O_j) = \pi_* \mu_m (O_{V,j}) \approx D_{ m +1}^{-(n- m)} \mu_m(N_m)$. Similarly, we define $Q_{l}(x)=Q_{V,l}(\pi(x))$ so that $P=\Pi_l Q_l$. Using the fact that each $Q_l$ can be perturbed by small constants, 
by invoking Lemma \ref{lci} for each $l$, $Y_l=Z(P_1,..,P_{ m}, Q_l)$ is a transverse complete intersection. 

We define $W:=N_{R^{\frac12+\delta_0}} Z$ and $O_j':=O_j \setminus W$. The following holds true
\[
W \cap N_m \subset \bigcup_l N_{20 R^{\frac12+\delta_0}}(Y_l). 
\]
The proof is short and can be found in \cite{Gu-II}. 

If $\mu_m(N_m)$ is concentrated on $W \cap N_1$, that is $\mu_m(W \cap N_1) \ges \frac{\mu(B_R)}{D_m^{\kappa_m}}$,
 then it is concentrated in one of the $N_{20 R^{\frac12+\delta_0}}(Y_l)$, for some $l$, thus since $Y_l=Z(P_1,..,P_{ m}, Q_l)$ is a transverse complete intersection, we have reduced the degree of the relevant algebraic variety. 

The other case is when $\mu$ is not concentrated on $W \cap N_m$, that is $\mu_m(W \cap N_1) \ll \frac{\mu(B_R)}{D_m^{\kappa_m}}$. 
Here the argument is similar to the one we employed in the previous 
subsection but with adjusted numerology.

We recall that we have $ \mu_m(O_j') \approx D_{m+1}^{-(n- m)} \mu_m(N_m)$. Next we define 
\[
f_{i,j} = \sum_{T \in \calT_i: T \cap O'_j \neq \emptyset} f_{T}.
\]
Since a tube cannot intersect more than $\approx D_{m+1}$ cells, it follows that for each $i \in \{1,..,k\}$ the following holds true
\begin{equation} \label{encount2}
\sum_j \| f_{i,j} \|^2_{L^2} \les D_{m +1} \| f_i \|^2_{L^2}. 
\end{equation}
 Since there are $\approx D_{m+1}^{n- m}$ cells $O_j'$ it follows that, for most $j$'s
 \begin{equation} \label{gb}
 \| f_{i,j} \|_{L^2}^2 \les D_{ m + 1}^{1-(n- m)} \| f_j \|^2_{L^2}. 
 \end{equation}
However the above analysis improves when we run it for the transversal waves, that is for the waves with $car_i=trans$. 
Here we refine a little the definition of $f_{i,j}$ in the following sense
\[
f_{i,j} = \sum_{T \in \calT_i: T \cap O'_j \cap N_m \neq \emptyset} f_{T}.
\] 
This could have been done for all other $i$'s since we have already restricted our attention to $N_m$ (recall \eqref{Cloc}). 
 From Lemma \ref{Zan} it follows that each tube $T$ has the property that $T \cap Z$ is contained in at most $D_{ m}$ balls of radius $R^{\frac12+\delta_0}$. 
 A tube $T \subset \calT_i$ may enter at most $D_{ m+1}$ cells $O_j'$; however the intersection $T \cap O'_j \cap N_m \neq \emptyset$ for at most 
 $D_{ m}$ cells $O_j'$ - simply because each such intersection will count towards an intersection of $T \cap Z$.

Therefore, for these $i$'s, the estimate \eqref{encount} improves to
\[
\sum_j \| f_{i,j} \|^2_{L^2} \les D_{m} \| f_i \|^2_{L^2},
\]
and further, since there are $\approx D_{ m+1}^{n- m}$ cells $O_j'$ it follows that, for most $j$'s
 \begin{equation} \label{bgb}
 \| f_{i,j} \|_{L^2}^2 \les D_{ m} D_{ m+1}^{-(n- m)} \| f_i \|^2_{L^2}. 
 \end{equation}
To wrap things up, for the transversal waves ($i=1,..,m$) we make use of the improved bound\eqref{bgb}, while for the other one we use the standard bound \eqref{gb}. Then we continue the argument with the following estimates
\[
\begin{split}
 \mu_m(N_m) & \approx D_{ m+1}^{n- m} \mu(O_j') \les D_{ m+1}^{n- m}  \left( A(R)  \Pi_{i=1}^k \| f_{i,j} \|_{L^2} + O(R^{-N}) \Pi_{i=1}^k \| f_{i} \|_{L^2(U_i)}\right)^p \\
 & \les \left( A(R) D_{ m+1}^{n- m} D_{ m+1}^{\frac{(1-(n- m))(k-m)p}2} (D_{ m} D_{ m+1}^{-(n- m)})^{\frac{mp}2} + O(R^{-Np}) \right)  \Pi_{j=1}^k \| f_{j} \|_{L^2}^p
   \\
 &  = \left( A(R) D_{ m+1}^{(n- m)(1-\frac{kp}2) +\frac{(k-m)p}2}  D_{ m}^{\frac{mp}2} + O(R^{-Np})  \right)  \Pi_{j=1}^k \| f_{j} \|_{L^2}^p.
 \end{split}
 \]
Provided that the power of $D_{ m+1}$ is negative, and by taking $D_{ m +1}$ large enough relative to the constant involved and $D_{ m}$, the argument gives us a good estimate:
\begin{equation} \label{gest2}
 \mu(B_R)^\frac1p \leq ( \frac12 A(R) + O(R^{-N})) \Pi_{j=1}^k \| f_{j} \|_{L^2};
\end{equation}
In justifying the above we have also used that $\mu_{m}(N_{m}) \gtrsim \frac{\mu(B_R)}{D_m^{\kappa_m}}$.
For the power of $D_{ m+1}$ to be negative, it suffices to have
\[
p > \frac{2(n- m)}{(n- m-1)k + m}. 
\]
We are given that $p > \frac{2(n+k)}{k(n+k-2)}$, therefore it suffices to verify that
\[
\frac{2(n+k)}{k(n+k-2)}> \frac{2(n- m)}{(n- m-1)k + m}.
\]
A little calculus shows that this is true given that $n >k$.  
 
Thus, under the hypothesis that at least $m$ waves are transversal, we have achieved our goal (for this subsection) of either providing a good estimate \eqref{gest2}
or by reducing the dimensionality of the relevant algebraic variety.

\subsection{The algebraic case - second take} \label{NCC}  Here we look into the cases which have not been treated in the above subsection. The setup is described at the beginning of the previous subsection, just before \eqref{car1}. Recall that we are in the algebraic case and in this subsection we consider the remaining cases when on the right-hand side of $\eqref{car1}$ there are at most $m-1$ waves which are transversal, that is the set of $i$ with $car_i=trans$ has cardinality at most $m-1$. 

In fact, by choosing $\gamma_0$ small enough with respect to the parameter $\nu$ in \eqref{trans}, it follows that there are exactly $m-1$ waves that are transversal and $n-m$ that are non-transversal. We recall here that we work with $m \leq n-1$, thus we have at least one wave whose main contribution comes from non-transversal packets; otherwise the arguments below would be vacuous. To keep notation simple, we drop the subscripts notation $trans, ntrans$ from the functions involved and simply keep in mind that the first $m-1$ waves are transversal and the last $n-m$ are not. 

We decompose $B_R$ into balls $B_l$ of radius $\rho$ with 
\[
\rho^{\frac12+\delta_{1}}=R^{\frac12+\delta_0},
\]
where $\delta_1$ is a parameter chosen such that $\delta_0 \ll \delta_1\ll 1$. 
For each $l$ and $i \in \{1,..,k\}$ we select the tubes from $\calT_i$ that intersect $B_l \cap N_{R^{\frac12+\delta_0}}(Z) \cap N_m$, that is
\[
\calT_{i,l}= \{ T \in \calT_i | T \cap N_{R^{\frac12+\delta_0}}(Z) \cap N_m \cap B_l \ne \emptyset \},
\]
and let $f_{i,l}= f_{i, \calT_{i,l}}$ whose definition we recall here
\[
f_{i,\calT_{i,l}}= \sum_{T \in \calT_{i,l}} f_{i,T}.
\]
The following holds true:
\[
\| \Pi_{i=1}^k \calE_i f_i \|_{L^p(B_R \cap N_m)}^p  \les \sum_l \| \Pi_{i=1}^k \calE_i f_{i,l} \|_{L^p(B_l \cap N_m)}^p + O(R^{-N}) \Pi_{i=1}^k \| f_i \|_{L^2}^p, 
\]
since the tubes that do not intersect $N_m$ have their contribution estimated by $O(R^{-N})$.

The relevant tubes are divided into two classes: tangential and non-tangential; this is done following Section \ref{sat}. A tube $T$ is called tangential to $Z$ in $B_l$ if for any $x \in T$ and $y \in Z \cap 2B_l$ with 
$|x-y| \leq 2 \rho^{\frac12+\delta_1}$,
\[
\angle(T,T_y Z) \leq \rho^{-\frac12+\delta_1}.
\]
As we already mentioned in section \ref{sat} it follows that $T \cap B_l \subset N_{2 \rho^{\frac12+\delta_1}} Z \cap B_l$. 

A tube is called non-tangential to $Z$ in $B_l$ if it is not tangential, that is there exists $x \in T$ and $y \in Z \cap 2B_l$ with $|x-y| \leq 2 \rho^{\frac12+\delta_1}$ such that $\angle(T,T_y Z) > \rho^{-\frac12+\delta_1}$. 
This implies 
\[
N_{2 \rho^{\frac12+\delta_1}} (T) \cap 2B_l \cap Z_{v(T), > \rho^{-\frac12+\delta_1}} \neq \emptyset. 
\]
The set of tangent tubes to $Z$ in $B_l$ is denoted by $\calT_{i,l,tang}$.  The complement set in $\calT_{i,l}$ is denoted by $\calT_{i,l,ntang}$. 
Using these families of waves packets, we then define
\[
f_{i,l,tan}= f_{i,\calT_{i,l,tang}}, \qquad f_{i,l,ntang}= f_{i,\calT_{i,l,ntang}}.
\]

Let us look into how the new concepts of tangential/non-tangential tubes relates to the previously introduce concepts of transversal/non-transversal tubes. The first observation is that for $R$ large enough, a tube $T$ that is tangent to $Z$ in $B_l$ cannot be transversal and vice-versa. Thus the first $m-1$ waves contain only tubes that are non-tangential (or with negligible contribution in $B_l$).

The rest of the waves contain only tubes that are non-transversal, and they can be either tangential or non-tangential to $Z$ in $B_l$. A priori the largest angle that a non-tangential tube can make with $Z$ can take any value, but in the current context things are a bit more rigid. Precisely, within $N_m$, the angle of such a tube with $Z$ is bounded from above by $2 \gamma_0$.

Then we continue with
\[
\sum_l \| \Pi_{i=1}^k \calE_i f_{i,l} \|_{L^p(B_l \cap N_m)}^p \lesssim \sum_l  \sum_{car} \| \Pi_{i=1}^k \calE_i f_{i,l,car_i} \|_{L^p(B_l \cap N_m)}^p. 
\]
where $\sum_{car(i)}$ runs over all possible combinations $car=(car(1),..,car(k)) \in \{ntang \}^{m-1} \times \{tang,ntang\}^{n-m}$. 
One of these combinations on the right-hand side is dominant, that is there is a choice $car \in \{ort\}^{ m - 1} \times \{tan,obl\}^{n-m}$ such that
\[
\sum_l \| \Pi_{i=1}^k \calE_i f_{i,l} \|_{L^p(B_l \cap N_m)}^p \les  \sum_l \| \Pi_{i=1}^l \calE_i f_{i,l,car_i} \|_{L^p(B_l \cap N_m)}^p, 
\]
and it suffices to estimate this combination. We encounter two scenarios: $car(i)=ntang$ for all $i=1,..,k$ or $car(i)=tang$ for at least one $i \in \{1,..,k\}$. 
The following two subsection will address these two scenarios. 

\subsection{Non-tangential tubes dominate} \label{nttd} All tubes are transversal, that is $car \in \{ntang\}^k$. From Lemma \ref{Zan} it follows that a tube $T \in \calT_i$ can be non-tangential to $Z$ in at most $D_m^n$ balls $B_l$, therefore if $car(i) \in \{ntang\}$, the following holds true:
\begin{equation} \label{trineq}
\sum_l \| f_{i,l,car(i)} \|_{L^2}^2 \les D^n_m \| f_i \|_{L^2}.
\end{equation}

The induction hypothesis on each ball $B_l$ gives us:
\[
\| \Pi_{i=1}^k \calE_i f_{i,l, car(i) } \|_{L^p(B_l \cap N_m) } \leq A(\rho) \Pi_{i=1}^k \| f_{i,l,car(i)} \|_{L^2(U_i)} + O(R^{-N}) \Pi_{i=1}^k \| f_{i} \|_{L^2(U_i)}.  
\]
We sum with respect to $l$ to obtain:
\[
\begin{split}
\left( \sum_l \| \Pi_{i=1}^k \calE_i f_{i,l, car(i)} \|^p_{L^p(B_R \cap N_m)} \right)^\frac1p \leq &  A(\rho) \left(  \sum_l \Pi_{i=1}^k \| f_{i,l,car(i)} \|_{L^2(U_i)}^p \right)^\frac1p + O(R^{-N}) \Pi_{i=1}^k \| f_{i} \|_{L^2(U_i)}  \\
\lesssim &  A(\rho) D_m^{\frac{nk}2}  \Pi_{i=1}^k \| f_{i} \|_{L^2(U_i)} + O(R^{-N}) \Pi_{i=1}^k \| f_{i} \|_{L^2(U_i)},
\end{split}
\]
where, in passing to the last line, we have used \eqref{trineq} to obtain
\[
\begin{split}
\left( \sum_l \Pi_{i=1}^k \| f_{i,l,car(i)} \|_{L^2(U_i)}^{\frac2k} \right)^{\frac{k}2} \lesssim \Pi_{i=1}^k  \left( \sum_l \| f_{i,l,car(i)} \|_{L^2(U_i)}^2 \right)^{\frac12} \lesssim D_m^{\frac{nk}2} \Pi_{i=1}^k \| f_{i} \|_{L^2(U_i)},
\end{split}
\]
which then is interpolated ($\frac2k < p < \infty$) with the trivial inequality
\[
\max_l \Pi_{i=1}^k \| f_{i,l,car(i)} \|_{L^2(U_i)} \les \Pi_{i=1}^k \| f_{i} \|_{L^2(U_i)}. 
\]
Therefore in this case we conclude with the following inequality:
\begin{equation} \label{gest3}
\mu(B_R)^\frac1p \lesssim \left( D_m^{\kappa_m + \frac{nk}2} A(\rho) + O(R^{-N}) \right) \Pi_{i=1}^k \| f_{i} \|_{L^2(U_i)}. 
\end{equation}

\subsection{At least one family of tubes is tangent} Here we estimate the following term
\[
\sum_l  \sum_{car} \| \Pi_{i=1}^k \calE_i f_{i,l,car_i} \|_{L^p(B_R \cap N_m)}^p
\]
where $car(m)= tan$; this choice does not restrict the generality of the argument. We recall that the first $m-1$ waves are transversal, hence non-tangent, the $m$'th wave is tangent, while the other waves are non-transversal, but each can be either tangential or non-tangential. 

In the previous two sections we have paid a lot of attention to the new smaller scale $\rho$; in this section the smaller scale play little role - the angular definition of tangent tubes contains the $\rho$ scales. Other than that the focus will be on balls of radius $R^{\frac12+\delta_0}$. 
Consider a ball $B_{R^{\frac12+\delta_0}}(x_0) \subset N_m$. 
With $V$ being the $n-m$ plane that has been used in the definition of $N_m$, we recall the following geometric information that we have so far for the current setup:

i) for any $i \in \{1,..,m-1\}$ and for any $T \in \calT_i$, $\angle(v(T),V) > 4 \gamma_0$;

ii) for any $i \in \{m+1,..,n-1\}$ and for any $T \in \calT_i$, $\angle(v(T),V) \leq 4 \gamma_0$;

iii) for any $T \in \calT_{m}$ with $T \cap B_{R^{\frac12+\delta_0}}(x_0) \neq \emptyset$, $\angle(v(T), T_{z_0} Z) \leq \rho^{-\frac12+\delta_1}$, for some $z_0 \in B_{R^{\frac12+\delta_0}}(x_0) \cap Z$;

iv) $\angle(V,T_{z_0} Z) \leq \gamma_0$.

We let
\[
f_{m,l,tan,B_{R^{\frac12+\delta_0}(x_0)}} = \sum_{T \in \calT_{m,l,tan}: T \cap B_{R^{\frac12+\delta_0}(x_0)} \neq \emptyset} f_{m,T}. 
\]
The above geometric statements imply that $\mathcal{F} (\calE_m f_{m,l,tan,B_{R^{\frac12+\delta_0}(x_0)}})$ is supported in the following subset of $S_m$
\[
\tilde S_m = \{ \zeta \in S_m: | \pi_{(T_{z_0} Z)^\perp} N_m(\zeta)| \leq \rho^{-\frac12+\delta_1}  \}.
\]
We intend to invoke Lemma \ref{kL}, but some more work is needed in that direction. To put things in perspective, $(T_{z_0} Z)^\perp$
has dimension $m$, thus hinting at localization properties in $m$ directions; but some of the directions in $(T_{z_0} Z)^\perp$ can be very close
to a linear combination of $N_1(\zeta_1),..,N_{m-1}(\zeta_{m-1})$  making them useless for the purpose of extracting good localization information. However it is clear that there is at least one good direction based on dimensionality considerations. 

We make a fixed choice of $\bar \zeta_i \in S_i, i \in \{1,..,k\} \setminus \{m\}$, but let $\zeta_m$ vary inside $S_m$. \eqref{curva} gives us:
\begin{equation}
| N_1(\bar \zeta_1) \wedge ... \wedge N_{k}(\bar \zeta_{k}) \wedge S_{N_m(\zeta_m)} v | \geq \nu_1 | v|,
\end{equation}
for every $v \in span(N_1(\bar \zeta_1), ..,  N_{k}(\bar \zeta_{k}))^\perp$. 
Therefore if we let $V_{\zeta_m}= span(\pi_{T_{\zeta_m} S_m} N_1(\bar \zeta_1), .., \pi_{T_{\zeta_m} S_m} N_{k}(\bar \zeta_k))$, we see that \eqref{Sv} is satisfied. 
This implies that $V_{\zeta_m}^\perp=span(\pi_{T_{\zeta_m} S_m} N_1(\bar \zeta_1), .., \pi_{T_{\zeta_{m}} S_{m}} N_{k}(\bar \zeta_{k}))^\perp$, 
where the orthogonal space was implicitly taken inside $T_{\zeta_m} S_m$. 

Next we show that \eqref{ph} is satisfied and for this we define $\calH^\perp=(T_{z_0} Z)^\perp \cap N_1(\bar \zeta_1)^\perp \cap .. \cap N_{m-1}(\bar \zeta_{m-1})^\perp$. $\calH^\perp$ is easily seen to be a one-dimensional subspace: indeed it is orthogonal to $N_1(\bar \zeta_1),..,N_{m-1}(\bar \zeta_{m-1})$ and "almost" orthogonal to $N_m(\zeta_m),..,N_{k}(\bar \zeta_{k})$ since it is orthogonal to  $T_{z_0} Z$.   We are going to weaken our localization property in that 
$\mathcal{F} (\calE_m f_{m,l,tan,B_{R^{\frac12+\delta_0}}(x_0)})$ is supported in the following subset of $S_m$
\[
\tilde S_m=  \{ \zeta \in S_m: | \pi_{\calH^\perp} N_m(\zeta)| \leq \rho^{-\frac12+\delta_1}  \}.
\]
Let $e \in \calH^\perp$ be the unit vector. Since $\calH^\perp$ is one-dimensional, it suffices to establish \eqref{ph} for $e$.  First we have that $|\angle(e,N_m(\zeta_m)) - \frac{\pi}2| \lesssim \rho^{-\frac12+\delta_1} \ll \nu$. Second, for each $i \in \{1,..,m-1\}$ we know that $e \perp N_i(\bar \zeta_i)$; from this it follows that  $|\angle (e,N_i(\zeta_i))-\frac{\pi}2| \leq c \ll 1$ for any choice $\zeta_i \in S_i,  i=1,..m-1$ (this is a consequence of the small diameter of each $S_i$, see \eqref{cN}). Third, for each $i \in \{m+1,..,k\}$, it follows that $|\angle (e,N_i(\bar \zeta_i))-\frac{\pi}2| \les \gamma_0$. Thus, by choosing $c$ and $\gamma_0$ small enough, \eqref{ph} is guaranteed to hold true.

At this point we can invoke Lemma \ref{kL} to conclude that there exists $\tilde S_m'$ a submanifold of $S_m$ of co-dimension one such that 
\[
\tilde S_m \subset B_{c_1\rho^{-\frac12+\delta_1}}(\tilde S_m'). 
\]
for some $c_1 \les 1$. In addition, $|N_{\zeta_m} S_m' \wedge N_1(\zeta_1) \wedge ... \wedge N_{k}(\zeta_{k})| \ges 1$ (where we skip $N_m(\zeta_m)$)
for any choice $\zeta_i \in S_i, i \in \{1,..,k\} \setminus \{ m\}$ and $\zeta_m \in \tilde S_m'$; this is first obtained for $\bar \zeta_1, .., \bar \zeta_k$ and the extended
to all choices $\zeta_i$ by using the smallness of the diameter of each $S_i$. 

At this time we can invoke Theorem \ref{MB} to obtain the estimate
\[
\| \Pi_{i=1}^{k} \calE_i f_{i,l,car(i)} \|_{L^\frac{2}{k-1}(B_{R^{\frac12+\delta_0}}(x_0))} \lesssim   (R^{\frac12+\delta_0})^\epsilon (\rho^{-\frac12+\delta_1})^\frac12 
\Pi_{i=1}^k \| f_{i,l,car(i)}  \|_{L^2(U_i)},
\]
where we use the convention that $f_{m,l,tan}$ is replaced by $f_{m,l,tan,B_{R^{\frac12+\delta_0}(x_0)}}$. In fact we can do better in several place.
We can drop the index $l$ and $car(i)$ on the right-hand side and replace each $f_{i}$
by $f_{i,B_{R^{\frac12+\delta_0}}(x_0)}$ where
\[
f_{i,B_{R^{\frac12+\delta_0}}(x_0)}= \sum_{T \in \calT_i: T \cap B_{R^{\frac12+\delta_0}}(x_0) \neq \emptyset} f_{i,T},
\]
thus we have
\[
\begin{split}
\| \Pi_{i=1}^{k} \calE_i f_{i,l,car(i)} \|_{L^\frac{2}{k-1}(B_{R^{\frac12+\delta_0}}(x_0))} & \lesssim_{\epsilon}  (R^{\frac12+\delta_0})^\epsilon
(\rho^{-\frac12+\delta_1})^\frac12 \Pi_{i=1}^k \| f_{i,B_{R^{\frac12+\delta_0}}(x_0))} \|_{L^2(U_i)} \\
& + O(R^{-N}) \Pi_{i=1}^k \| f_i\|_{L^2(U_i)}.
\end{split}
\]
We have implicitly used the fact the dual scale of the localization is much smaller than the scale of the ball where the estimate is performed; 
indeed the localization comes at scale $\rho^{-\frac12+\delta_1}$ whose dual scale is 
$\rho^{\frac12-\delta_1}=\rho^{\frac12+\delta_1-2\delta_1}=R^{\frac12 +\delta_0} \rho^{-2\delta_1} \ll R^{\frac12 +\delta_0}$, and $R^{\frac12 +\delta_0}$
 is the scale of the ball where we perform the estimate. 

To conclude the main ingredient in this argument, the left-hand side above needs to be summed up in $l^{\frac2{k-1}}$ with respect to a relevant set of balls $B_{R^{\frac12+\delta_0}}(x_0)$ that cover the set $N_m$:
\[
\| \Pi_{i=1}^{k} \calE_i f_{i,l,car(i)} \|_{L^\frac{2}{k-1}(N_m)} \lesssim  (R^{\frac12+\delta_0})^\epsilon (\rho^{-\frac12+\delta_1})^\frac12 
\left( \sum_{x_0} \Pi_{i=1}^k \| f_{i,B_{R^{\frac12+\delta_0}}(x_0))} \|_{L^2(U_i)}^{\frac2{k-1}} \right)^{\frac{k-1}2}.
\]
where $x_0$ is chosen such that the set $\{ B_{R^{\frac12+\delta_0}}(x_0)) \}_{x_0}$ has the finite intersection property and it covers $N_m$.

At this point we claim the following result 
\begin{equation} \label{aux30}
\left( \sum_{x_0 \in R^{\frac12+\delta_0} \Z^n: |x_0| \lesssim R} \Pi_{i=1}^k \| f_{i,B_{R^{\frac12+\delta_0}}(x_0)} \|_{L^2(U_i)}^{\frac2{k-1}} \right)^{\frac{k-1}2} \lesssim_{\epsilon}
(\frac{R}{R^{\frac12+\delta_0}})^\epsilon \Pi_{i=1}^k \| f_{i} \|_{L^2(U_i)}.
\end{equation}
This result essentially tells us that we can sum the right-hand side of the previous inequality over a relevant set of balls $B_{R^{\frac12+\delta_0}}(x_0)$ that cover that cover $B_R$. Taking \eqref{aux30} for granted (we will address this at the end of this section), allows us to conclude with
\[
\| \Pi_{i=1}^{n-1} \calE_i f_{i,l,car(i)} \|_{L^\frac{2}{n-2}(N_m)} \lesssim \left( R^\epsilon (\rho^{-\frac12+\delta_1})^\frac12 + O(R^{-N}) \right) \Pi_{i=1}^{n-1} \| f_{i} \|_{L^2(U_i)},
\]
where we recall that $k=n-1$. 

At the same time we have the trivial estimate:
\[
\| \Pi_{i=1}^{n-1} \calE_i f_i \|_{L^\frac2{n-1}(B_R)} \les  R^{\frac{n-1}2} \Pi_{i=1}^{n-1} \| f_i \|_{L^2(U_i)}.
\]
Then we interpolate the estimates in $L^{\frac2{n-2}}$ with $L^\frac2{n-1}$ with $\theta=\frac{2(n-1)}{2n-1}$ and $1-\theta=\frac{1}{2n-1}$ respectively,
to obtain an estimate in  $L^{\frac{2(2n-1)}{(n-1)(2n-3)}}$ with the factor
\[
(R^\epsilon)^{\frac{2(n-1)}{2n-1}} (\rho^{-\frac12+\delta_1})^{\frac12 \cdot \frac{2(n-1)}{2n-1}}(R^{\frac{n-1}2})^{\frac1{2n-1}} \lesssim 
R^{\epsilon + O(\delta_0 +\delta_1^2)}.
\]
This last inequality suggests that a good choice of parameters is
\[
\delta \ll \delta_0 \ll \delta_1 \ll \epsilon. 
\]
This leads to the following inequality:
\begin{equation} \label{gest4}
\mu(B_R)^\frac1p \leq C(\epsilon) D_m^{\kappa_m} R^{\frac32\epsilon}  \Pi_{i=1}^k \| f_{i} \|_{L^2(U_i)},
\end{equation}
where $\epsilon$ and $C(\epsilon)$ are the parameters given by Theorem \ref{MB} and \eqref{aux30} - as about $C(\epsilon)$ we pick the maximum of the two used in those results.

\subsection{The special case $m=n$.} \label{special} In the above analysis we have not considered the special case $m=n$, see the discussion at the begining of Section \ref{pp2}. Precisely we look at the following situation. We are given an algebraic variety $Z=Z(P_1,..,P_{n})$ which is a transverse complete intersection. The polynomials $P_i$'s have degrees less than $D_i$, with $D_1 \leq D_2 \leq .. \leq D_n$ and the following holds true:
\begin{equation} \label{alg4a}
\mu(N_{R^{\frac12+ \delta_0}}(Z)  \cap B_R) \gtrsim \frac{\mu(B_R)}{D_n^{\kappa_n}}.
\end{equation} 

From Theorem \ref{ftci} it follows that $N_{R^{\frac12+ \delta_0}}(Z)$ is a finite union of a collection of balls $\{ B_{R^{\frac12+\delta_0}}(x_i) \}_{i=1}^N$ 
where $N \leq \Pi_{i=1}^n D_i \leq D_n^n$. In each of this balls we have
\[
\mu(B_{R^{\frac12+\delta_0}}(x_i))^\frac1p \leq A(R^{\frac12+\delta_0}) \Pi_{i=1}^k \| f_i \|_{L^2(U_i)}. 
\]
Thus
\[
\mu(N_{R^{\frac12+ \delta_0}}(Z))^\frac1p \lesssim N^\frac1p A(R^{\frac12+\delta_0}) \Pi_{i=1}^k \| f_i \|_{L^2(U_i)}, 
\]
from which we can conclude with
\begin{equation} \label{gest5}
\mu(B_R)^\frac1p \lesssim D_n^{\frac{\kappa_n+n}p} A(R^{\frac12+\delta_0}) \Pi_{i=1}^k \| f_i \|_{L^2(U_i)}. 
\end{equation}

\subsection{Conclusion} \label{concl}
In this section we bring together all the bounds we collected and conclude with the desired bound. 
It is obvious that for $R \les 1$ we have $A(R) \les 1$.  

Putting together all the estimates \eqref{gest1}, \eqref{gest2}, \eqref{gest3}, \eqref{gest4} and \eqref{gest5}, we conclude with
\[
A(R) \leq \max( \frac12 A(R) + O(R^{-N}), C A(R^{\frac{\frac12+\delta_0}{\frac12+\delta_1}}) + O(R^{-N}), C A(R^{\frac12+\delta_0}), C \cdot C(\epsilon)  R^{\frac32\epsilon}). 
\]
Here $C=C(D_1,..,D_n,\kappa_1,..,\kappa_n)$ with the observation that the degrees $D_i$ and powers $\kappa_i$ are universal, independent on $R$. 
From this inequality we can conclude with
\[
A(R) \leq \tilde C(\epsilon) R^{2\epsilon},
\]
for $R \gg 1$. This finishes the proof of Proposition \ref{mainT}, modulo the claim \eqref{aux30} which is discussed in the next section.

We also need to go back at the very beginning of this argument, see the commentaries following Proposition \ref{indp}, and address the issue of how the proof needs to be modified when taking into account the technical layer that the wave margin brings in. The current argument reveals two main ways in which we invoke the induction
hypothesis: 

i) we use information at scale $R$ to bootstrap information at the same scale $R$ - this is used in the non-algebraic case;

ii) we use information at scale $R^{\frac{1+2\delta_0}{1+2\delta_1}}$ to obtain information at scale $R$, where $R^{\frac{1+2\delta_0}{1+2\delta_1}} \ll R$ - this is used in the algebraic case when all the tubes are non-tangential, see Section \ref{nttd}. 

The use of wave packets perturbs the margin of each wave by a factor of $\approx R^{-\frac{1+\delta}2}$. Then it is easily seen that for ii) we have that the new margin 
is bounded by
\[
M- R^{-\frac14} - C  R^{-\frac{1+\delta}2} < M - (R^{\frac{1+2\delta_0}{1+2\delta_1}})^{-\frac14} .
\]
This suffices for a clean integration of the margin concept into the argument in the case ii). 

A bit of care is needed for i), since we use a bootstrap type argument and the margin is not amenable to such an argument, given that it will be modified. The easy way 
to fix this is as follows: instead of using a bootstrap type argument, we simply use a direct induction on scale argument which uses the information at scale $\frac{R}2$ to conclude with information at scale $R$ and the simple observation that 
\[
A(R) \leq C A(\frac{R}2),
\]
with an explicit constant $C$ (which is independent of $R$) that can be later absorbed in the argument. This modifies the above inequality to 
\[
A(R) \leq \max( \frac12 A(\frac{R}2) + O(R^{-N}), C A(R^{\frac{\frac12+\delta_0}{\frac12+\delta_1}}) + O(R^{-N}), C A(R^{\frac12+\delta_0}), C \cdot C(\epsilon)  R^{\frac32\epsilon}),
\]
from which we can conclude the argument as above. Concerning the margin we note that the simple inequality
\[
M- R^{-\frac14} - C  R^{-\frac{1+\delta}2} < M - (\frac{R}2)^{-\frac14},
\]
allows for a clean integration of the margin concept in the argument. 

\subsection{Argument for \eqref{aux30}}

In this section we address the claim we made in \eqref{aux30}:
\[
\left( \sum_{x_0 \in R^{\frac12+\delta_0} \Z^n: |x_0| \lesssim R} \Pi_{i=1}^k \| f_{i,B_{R^{\frac12+\delta_0}}(x_0))} \|_{L^2(U_i)}^{\frac2{k-1}} \right)^{\frac{k-1}2} \lesssim_{\epsilon}
(\frac{R}{R^{\frac12+\delta_0}})^\epsilon \Pi_{i=1}^k \| f_{i} \|_{L^2(U_i)}.
\]
It is important to note that this is at the level of a generic estimate, that is we use only the transversality condition \eqref{trans} and there is no need to use any curvature information coming from \eqref{curva}. In addition, the fact that $k=n-1$ plays no role in this estimate, and we can work with any $1 \leq k \leq n$. 

Without restricting the generality of the argument, we can assume that each $S_i$ has a parametrization of type 
$\xi_i=\varphi_i(\xi_1,..,\hat \xi_i,..,\xi_n )$. This allows us to use the conservation law
\[
\| \calE_i f_i \|_{L^2(x_i=t)} = \| \calE_i f_i \|_{L^2(x_i=0)} \approx \| f_i \|_{L^2(U_i)}, \quad \forall t \in \R. 
\]
The above $L^2(x_i=0)$ should be understood as follows:
\[
\| F \|_{L^2(x_i=t)}^2 = \int |F(x_1,..,x_{i-1},t,x_{i+1},..,x_n)|^2 dx_1 ... d \hat x_i .. dx_n. 
\]
Then \eqref{aux30} follows from the following inequality:
\begin{equation} \label{aux31}
\left( \sum_{x_0 \in R^{\frac12+\delta_0} \Z^n: |x_0| \lesssim R} \Pi_{i=1}^k \| \chi_{B_{R^{\frac12+\delta_0}}(x_0))}  \calE_i f_i\|_{L^2(x_i=x_{0,i})}^{\frac2{k-1}} \right)^{\frac{k-1}2} \lesssim_{\epsilon}
(\frac{R}{R^{\frac12+\delta_0}})^\epsilon \Pi_{i=1}^k \| f_{i} \|_{L^2(U_i)}.
\end{equation}
Now \eqref{aux31} reads as follows: we collect the mass of each wave $\calE_i f_i$ in $B_{R^{\frac12+\delta_0}}(x_0))$ (measured on appropriate hyperplanes passing through the center of the ball), multiply these masses and control their $l^{\frac2{k-1}}$ norm with respect to the balls. 
This estimate is "morally" at the same level with the $L^{\frac2{k-1}}$ estimate for $\calE_i f_i$. Indeed, if one follows the proof provided by the 
author in \cite{Be1} for the multilinear restriction estimate in $L^{\frac2{k-1}}$ the argument uses an induction on scales technique:
we use the induction on a smaller ball, identify the component of the mass of each wave that is relevant to that mass (which is very similar to 
$\| \chi_{B_r (x_0)}  \calE_i f_i \|_{L^2(x_i=x_{0,i})}$)  and then perform an $l^{\frac2{k-1}}$ estimate on the product of these masses - to obtain 
an estimate similar to \eqref{aux31}. We leave the details to the interested reader. 

A more complicated version of \eqref{aux31} is provided in Theorem 7.1 in \cite{Be4}, and the analogy with \eqref{aux31} is highlighted in the remarks following the statement of the theorem there. However, the setup in \cite{Be4} is far more complex and, although doable, it may be difficult to deduct (the easier) \eqref{aux31} from the (more complicated) arguments used in the proof of Theorem 7.1 in  \cite{Be4}.

\bibliographystyle{amsplain} \bibliography{HA-refs}

\end{document}